\newtheorem{remark}{Remark}
\newtheorem{theorem}{Theorem}
\newtheorem{definition}{Definition}
\begin{document}

\begin{frontmatter}



\title{High-order structure-preserving methods for damped Hamiltonian system}

\author[inst1]{Lu Li}

\affiliation[inst1]{organization={School of Mathematics (Zhuhai)},
            addressline={Sun Yat-sen University}, 
            city={Zhuhai},
           postcode={519082}, 
            state={Guangdong},
            country={China}}


\begin{abstract}

We present a novel methodology for constructing arbitrarily high-order structure-preserving methods tailored for damped Hamiltonian systems.  This method combines the idea of exponential integrator and energy-preserving collocation methods, effectively preserving the energy dissipation ratio introduced by the damping terms.  We demonstrate the conservative properties of these methods and confirm their order of accuracy through numerical experiments involving the damped Burger’s equation and Korteweg-de-Vries equation.
\end{abstract}



\begin{keyword}
  structure-preserving \sep damped Hamiltonian \sep exponential integrator \sep energy dissipation
\end{keyword}

\end{frontmatter}



\section{Introduction}
In this paper, we consider damped Hamiltonian systems formulated as follows:
\begin{equation}\label{H-damp-ODE}
\dot{x}=S(x)\nabla H(x)-D(t)x,
\end{equation}
where $x\in\mathbb{R}^N$, the matrix $S(x)\in\mathbb{R}^{N\times N}$ is skew-symmetric, the  matrix $D(t)\in\mathbb{R}^{N\times N}$ is  diagonal, and the energy function $H(x)$ maps any vector $x$ to a real number. Such systems exhibit an interplay between conservative and dissipative forces. While conservative forces govern the system's energy conservation and produce regular oscillatory behavior, damping terms introduce energy dissipation and may lead to convergence to equilibrium or other complex behaviors. These systems have become more popular in applications such as  biomechanics \cite{chang2021controlling}  and physics \cite{temam2012infinite}. 

It is well established that the exact solution of an Hamiltonian system without the damping term $D(t)x$, represented by: 
\begin{equation}\label{H-ODE}
\dot{x}=S(x)\nabla H(x),
\end{equation}
preserves the energy, i.e., $\frac{dH(x)}{dt}\mid_{x=x(t)}=0$, and also symplectic structure if $S(x)$ is the canonical skew-symmetric matrix. Extensive research has focused on developing energy-preserving or symplectic numerical methods for such systems, owing to their superior numerical properties  \cite{hairer2006geometric,leimkuhler2004simulating}. When damping terms are included, the modified damped Hamiltonian system  \eqref{H-damp-ODE}  preserves modified structures such as energy dissipation and  conformal symplecticity, provided  $S(x)$ remains a canonical skew-symmetric matrix and and the diagonal matrix  $D(t)$ has constant equal diagonal elements \cite{mclachlan2001conformal}. Numerical methods preserving these structures include conformal symplecticity \cite{mclachlan2002splitting, modin2011geometric, bhatt2016second, sun2005structure,bhatt2017structure} and conformal multi-symplecticity \cite{moore2009conformal,moore2013conformal,bhatt2019exponential,cai2017modelling}, as well as recently developed methods that focus on preserving correct energy dissipation rates \cite{bhatt2021projected,moore2021exponential, uzunca2023linearly}. Conformal symplectic methods  preserve the decay rates in linear and quadratic invariants under certain constraints on the coefficient functions $D(t)$ \cite{bhatt2017structure}, and in general, it is not possible to design numerical methods that can preserve conformal symplecticity and energy dissipation rate simultaneously. The choice of conformal symplectic methods or methods that preserve exact energy property depends on the objective of the study as well as the differential equation.  In this paper, we consider methods that can preserve the energy dissipation rate for damped Hamiltonian systems with more general diagonal matrix $D(t)$.

Suppose $I(x)$ is an invariant of  \eqref{H-ODE}, i.e., $I(x)$ satisfies 
\begin{equation*}
\frac{d}{dt}I(x)=\nabla I(x)^TS(x)\nabla H(x)=0.
\end{equation*}
We consider a special invariant $I(x)$ which satisfies $\nabla I(x)^TD(t)x=\eta(t)I(x)$, where $\eta(t)>0$ is a continuous function. Then for such $I(x)$ and the damped Hamiltonian system \eqref{H-damp-ODE}, we have 
\begin{equation}\label{conformal condition}
\frac{d}{dt}I(x)=-\eta(t)I(x).
\end{equation}
The solution of equation \eqref{H-damp-ODE} satisfies $I(x(t))=e^{-\phi(t)}I(x(0))$ with $\phi(t)=\int_0^{t}\eta(s)ds$, which indicates that $I(x(t))$ dissipates exponentially with the rate $e^{-\int_0^{t}\eta(s)ds}$. This energy property of the exact solution for the damped Hamiltonian system \eqref{H-damp-ODE} motivates the study on numerical methods that can preserve such exponential decay property.

Discrete gradient method has been a popular technique to construct methods that can preserve the conservative or dissipative properties of energy for differential equations \cite{celledoni2012preserving,hairer2014energy,li2016exponential,wang2019exponential,li2022new}. Recent studies have shown that a modification of  the discrete gradient method for Hamiltonian systems \eqref{H-ODE} can exactly preserve the  dissipation rate of a damped Hamiltonian systems when  the energy function $H(x)$ has special forms \cite{moore2021exponential}. Based on this work, it has been shown that the linearly implicit methods,  such as Kahan's method \cite{celledoni2012geometric,eidnes2021linearly} and the method based on a polarization of  the polynomial  Hamiltonian functions \cite{dahlby2011general,eidnes2021linearly}, can be used to construct linearly implicit methods that can preserve the energy dissipation property for a certain type of damped Hamiltonian system \cite{uzunca2023linearly}. However, all the above mentioned  energy-dissipating methods tailed to damped Hamiltonian systems are second-order. We aim to construct high-order  methods that can preserve the dissipation rate of energy for damped Hamiltonian systems. High-order methods are essential for solving differential equations due to their efficiency, accuracy, and stability. They achieve faster convergence, requiring fewer iterations to reach precise solutions, which is crucial for complex and large-scale problems. Despite their complexity, they often reduce overall computational costs. Furthermore, high-order methods can serve as reliable benchmarks for evaluating new numerical techniques, especially when exact solutions are unavailable.

A general technique to construct high-order methods is the splitting technique; however, it may destroy the structure preservation property and the determination of the appropriate order conditions is not always straightforward. Another method is to use the order condition of Runge-Kutta method, which has been shown effective in constructing  methods preserving
symplecticity \cite{hairer2006geometric}, multisymplecticity\cite{mclachlan2014high} or conformal symplecticity \cite{bhatt2017structure,mclachlan2014high}. For constructing high-order methods that can preserve energy, Hairer proposed the energy-preserving collocation method which also uses the order condition of Runge-Kutta method \cite{hairer2010energy}. This method generalizes the averaged vector field method, known as a discrete gradient method, from second to higher order. We draw inspiration from the energy-preserving collocation method.  By incorporating an exponential transformation to the energy-preserving collocation method, we develop new arbitrarily high-order symmetric exponential integrators that can preserve the energy dissipation rate.

The rest of the paper is organised as follows. Section \ref{EPC} reviews energy-preserving collocation methods. Followed is the presentation of the new methods in section \ref{Higher-order EPC-damped}. In  section \ref{Numerical-test}, we test the preservation of energy dissipation rate and confirm the order of the methods on two differential equations. Finally, we conclude the study in the last section.

\section{Energy-preserving Collocation methods for Hamiltonian systems}\label{EPC}
In this section, we recall the energy-preserving collocation methods for Hamiltonian systems \eqref{H-ODE}. Denoting $f(x)=S(x)\nabla H(x)$, the methods are defined as follows:
\begin{definition}\label{Def1-coll-polynomial}
\cite{hairer2010energy} Let $c_1, c_2,\cdots, c_s$ be distinct real numbers (usually $0\le c_i\le 1$) and define $\ell_i(\tau)=\prod\limits_{j=1, j \neq i}^s \frac{\tau-c_j}{c_i-c_j}, \quad b_i=\int_0^1 \ell_i(\tau) \mathrm{d} \tau$. Consider a polynomial $u(t)$ of degree $s$ satisfying
$$
\begin{aligned}
& u\left(t_0\right)=x_0 \\
& \dot{u}\left(t_0+c_i h\right)=\frac{1}{b_i} \int_0^1 \ell_i(\tau) f\left(u\left(t_0+\tau h\right)\right) \mathrm{d} \tau .
\end{aligned}
$$
The numerical solution after one step is then defined by $x_1=u\left(t_0+h\right)$.  
\end{definition}

By interpolating the function $\dot{u}\left(t_0+ \tau h\right)$ as a polynomial and using a continuum of stages $\tau\in [0,1]$, the above defined energy-preserving collocation methods can be written as Runge-Kutta methods 
\begin{equation}\label{EP-CRK}
X_\tau=x_0+h \int_0^1 A_{\tau, \sigma} f\left(X_\sigma\right) \mathrm{d} \sigma, \quad x_1=x_0+h \int_0^1 B_\sigma f\left(X_\sigma\right) \mathrm{d} \sigma,
\end{equation}
where $B_\sigma=1$ and $X_\tau$ are the values of the interpolation polynomial $u\left(t_0+\tau h\right)$ approximating 
 $x(t_0+C_\tau h)$, with $C_\tau=\int_0^1A_{\tau,\sigma}d\sigma$,  satisfying $C_\tau=\tau$. The coefficients $A_{\tau,\sigma}$ are given by
\begin{equation*}
 A_{\tau, \sigma}=\sum_{i=1}^s \frac{1}{b_i} \int_0^\tau \ell_i(\alpha) \mathrm{d} \alpha \ell_i(\sigma).
\end{equation*}
Different choices of collocation points may lead to different $A_{\tau,\sigma}$, resulting in numerical schemes of different orders. The order theorem of scheme \eqref{EP-CRK} is provided in  \cite{hairer2010energy} (see Theorem 1 therein). This order theorem 
determines the form of $A_{\tau,\sigma}$ that yields the highest order scheme. 

The averaged vector field method can be viewed as a special case of $s=1$, where $\dot{u}\left(t_0+ \tau h\right)$ is a constant, regardless of what $c_1$ is.

For $s=2$, any nodes providing a quadrature formula of order three or higher result in
Runge-Kutta coefficients:
\begin{equation*}
 A_{\tau, \sigma}=-6\tau(1-\tau)\sigma+\tau(4-3\tau), \quad B_\sigma=1.
\end{equation*}
 Consider the second polynomial $u(t)$ as a linear combination of $x_0=u(t_0)$, $x_{\frac{1}{2}}=u(t_0+\frac{h}{2})$, and $x_1=u(t_0+h)$, then the following energy-preserving collocation scheme
\begin{equation*}
\begin{split}
X_{\frac{1}{2}} & =x_0+h \int_0^1\left(\frac{5}{4}-\frac{3}{2} \sigma\right) f\left(u\left(t_0+\sigma h\right)\right) \mathrm{d} \sigma \\
x_1 & =x_0+h \int_0^1 f\left(u\left(t_0+\sigma h\right)\right) \mathrm{d} \sigma
\end{split}
\end{equation*}
is of order 4.

For $\mathbf{s}=\mathbf{3}$, $A_{\tau, \sigma}$ is a second-order polynomial of $\sigma$ with the form $A_{\tau, \sigma}=\tau(p_0(\tau)+p_1(\tau)\sigma+p_2(\tau)\sigma^2)$, where $p_0(\tau)$, $p_1(\tau)$ and $p_2(\tau)$ are second-order polynomials. Using the order condition $\int_0^1 A_{\tau, \sigma}\sigma^{k-1}d\sigma=\frac{1}{k}\tau^k$, for $k+1,\cdots, s$, we can determine $p_i(\tau), i=0,1,2$ to get the following Runge-Kutta coefficients:
$$
A_{\tau, \sigma}=\tau\left(\left(9-18 \tau+10 \tau^2\right)-12\left(3-8 \tau+5 \tau^2\right) \sigma+30\left(1-3 \tau+2 \tau^2\right) \sigma^2\right).
$$
Representing the polynomial $u(t)$ of degree 3 as linear combination of $x_0, X_{\frac{1}{3}}, X_{\frac{2}{3}}$, and $x_1$, the method reads
$$
\begin{aligned}
X_{\frac{1}{3}} & =x_0+h \int_0^1\left(\frac{37}{27}-\frac{32}{9} \sigma+\frac{20}{9} \sigma^2\right) f\left(u\left(t_0+\sigma h\right)\right) \mathrm{d} \sigma \\
X_{\frac{2}{3}} & =x_0+h \int_0^1\left(\frac{26}{27}+\frac{8}{9} \sigma-\frac{20}{9} \sigma^2\right) f\left(u\left(t_0+\sigma h\right)\right) \mathrm{d} \sigma \\
x_1 & =x_0+h \int_0^1 f\left(u\left(t_0+\sigma h\right)\right) \mathrm{d} \sigma
\end{aligned}
$$
and is of order 6.


\section{High-order exponential energy dissipation-preserving  integrators}\label{Higher-order EPC-damped}
In this section, we introduce  exponential transformations to the variables $x_0$, $x_1$ and the intermediate stages introduced in Section \ref{EPC} to construct high-order  methods preserving the energy dissipation rate for the damped Hamiltonian systems  \eqref{H-damp-ODE}. 

Denote $Y(t)=\int_{t_0+h/2}^{t} D(\tau) d\tau$ and replace $X_\sigma$ by $e^{Y_\sigma}X_\sigma$ in equation \eqref{EP-CRK}. This yields the following exponential dissipation-preserving numerical scheme for the damped system \eqref{H-damp-ODE}:
\begin{equation}\label{EEP-C}
\begin{split}
x_1&=e^{-Y_1}\big(e^{Y_0}x_0+h \int_0^1  f\left(e^{Y_\sigma}X_\sigma\right) \mathrm{d} \sigma\big),\quad \text{where}\\
X_\tau&=e^{-Y_\tau}\big(e^{Y_0}x_0+h \int_0^1 A_{\tau, \sigma} f\left(e^{Y_\sigma}X_\sigma\right) \mathrm{d} \sigma\big). 
\end{split}
\end{equation}
Similar to how the energy-preserving collocation method  \eqref{EP-CRK} is derived from Definition \ref{Def1-coll-polynomial}, we give the following  analogy  definition to derive the exponential energy-preserving scheme  \eqref{EEP-C}.   
\begin{definition}\label{Def2-coll-polynomial-modify}
Let $c_1, c_2,\cdots, c_s$ be distinct real numbers (usually $0\le c_i\le 1$) and define $\ell_i(\tau)=\prod\limits_{j=1, j \neq i}^s \frac{\tau-c_j}{c_i-c_j}, \quad b_i=\int_0^1 \ell_i(\tau) \mathrm{d} \tau$. We consider a polynomial $v(t)$ of degree $s$ satisfying 
$$
\begin{aligned}
& v\left(t_0\right)=e^{Y_0}x_0 \\
& \dot{v}\left(t_0+c_i h\right)=\frac{1}{b_i} \int_0^1 \ell_i(\tau) S\nabla H\left(v(t_0+\tau h)\right) \mathrm{d} \tau.
\end{aligned}
$$
The numerical solution after one step is then defined by $x_1=e^{-Y_1}v\left(t_0+h\right)$.  
\end{definition}
\begin{remark}\label{remark-EP-damped-def}
Notice that $X_{\tau}=e^{-Y_{\tau}}v\left(t_0+\tau h\right)$; we use $v\left(t_0+\tau h\right)=e^{Y_{\tau}}X_{\tau}$ in implementation of the proposed methods in Definition \ref{Def2-coll-polynomial-modify}.
\end{remark}
The second order method is given by the mean value averaged exponential discrete gradient method introduced in \cite{moore2021exponential}
\begin{equation*}
\begin{split}
x_1 & = e^{-Y_1}\big(e^{Y_0}x_0+h \int_0^1 f\left(v\left(t_0+\sigma h\right)\right) \mathrm{d} \sigma\big),
\end{split}
\end{equation*}
where $v(t)$ is a first order polynomial passing through the points $e^{Y_0}x_0$ and $e^{Y_1}x_1$.

The fourth order method is given by 
\begin{equation*}
\begin{split}
X_{\frac{1}{2}} & = e^{-Y_\frac{1}{2}}\big(e^{Y_0}x_0+h \int_0^1\left(\frac{5}{4}-\frac{3}{2} \sigma\right) f\left(v\left(t_0+\sigma h\right)\right) \mathrm{d} \sigma\big) \\
x_1 & = e^{-Y_1}\big(e^{Y_0}x_0+h \int_0^1 f\left(v\left(t_0+\sigma h\right)\right) \mathrm{d} \sigma\big),
\end{split}
\end{equation*}
where $v(t)$ is a second polynomial passing through the points $e^{Y_0}x_0$, $e^{Y_\frac{1}{2}}X_{\frac{1}{2}}$, and $e^{Y_1}x_1$. 

The sixth order method is given by 
$$
\begin{aligned}
X_{\frac{1}{3}} & = e^{-Y_\frac{1}{3}}\big( e^{Y_0}x_0+h \int_0^1\left(\frac{37}{27}-\frac{32}{9} \sigma+\frac{20}{9} \sigma^2\right) f\left(v\left(t_0+\sigma h\right)\right) \mathrm{d} \sigma\big) \\
X_{\frac{2}{3}} & = e^{-Y_\frac{2}{3}}\big(e^{Y_0}x_0+h \int_0^1\left(\frac{26}{27}+\frac{8}{9} \sigma-\frac{20}{9} \sigma^2\right) f\left(v\left(t_0+\sigma h\right)\right) \mathrm{d} \sigma\big) \\
x_1 & = e^{-Y_1}\big(e^{Y_0}x_0+h \int_0^1 f\left(v\left(t_0+\sigma h\right)\right) \mathrm{d} \sigma\big),
\end{aligned}
$$
where $v(t)$ is a third-order polynomial passing through the points $e^{Y_0}x_0$, $e^{Y_\frac{1}{3}}X_{\frac{1}{3}}$, $e^{Y_\frac{2}{3}}X_{\frac{2}{3}}$, and $e^{Y_1}x_1$. 

Following the order condition, the eighth-order Runge-Kutta coefficients with $\mathbf{s}=\mathbf{4}$ are
\begin{equation*}
\begin{split}
A_{\tau, \sigma}=&\left(16\tau-60 \tau^2+80 \tau^3-35\tau^4\right)+\left(-120\tau+600\tau^2-900 \tau^3+420\tau^4\right) \sigma\\
&+\left(240\tau-1350 \tau^2+2160 \tau^3-1050\tau^4\right) \sigma^2\\
&+\left(-140\tau+840 \tau^2-1400 \tau^3+700\tau^4\right)\sigma^3,
\end{split}
\end{equation*}
and the scheme has the following form
\begin{equation}
\begin{split}
X_{\frac{1}{4}} & =e^{-Y_\frac{1}{4}}\big( e^{Y_0}x_0+h \int_0^1\left(\frac{349}{256}-\frac{315}{64} \sigma+\frac{675}{128} \sigma^2-\frac{105}{64} \sigma^3\right) f\left(v\left(t_0+\sigma h\right)\right) \mathrm{d} \sigma\big) \\
X_{\frac{1}{2}} & =e^{-Y_\frac{1}{2}}\big( e^{Y_0}x_0+h \int_0^1\left(\frac{13}{16}+\frac{15}{4} \sigma-\frac{105}{8} \sigma^2+\frac{35}{4} \sigma^3\right) f\left(v\left(t_0+\sigma h\right)\right) \mathrm{d} \sigma\big) \\
X_{\frac{3}{4}} & =e^{-Y_\frac{3}{4}}\big( e^{Y_0}x_0+h \int_0^1\left(\frac{237}{256}+\frac{45}{64} \sigma-\frac{45}{128} \sigma^2-\frac{105}{64} \sigma^3\right) f\left(v\left(t_0+\sigma h\right)\right) \mathrm{d} \sigma\big) \\
x_1 & =e^{-Y_1}\big( e^{Y_0}x_0+h \int_0^1 f\left(v\left(t_0+\sigma h\right)\right) \mathrm{d} \sigma\big),
\end{split}
\end{equation}
where the fourth-order polynomial $v(t)$  passes through the points
$e^{Y_0}x_0$, $e^{Y_\frac{1}{4}}X_{\frac{1}{4}}$, $e^{Y_\frac{1}{2}}X_{\frac{1}{2}}$, $e^{Y_\frac{3}{4}}X_{\frac{3}{4}}$, and $e^{Y_1}x_1$. 

\begin{theorem}\label{Structure-preservation}
For system \eqref{H-damp-ODE}, the exponential  energy-preserving collocation methods \eqref{EEP-C} preserve the dissipative rate, i.e., 
$\frac{dH(x)}{dt}=-\eta(t)H(x)$, 
whenever $H$ satisfies 
\begin{equation}\label{constraint}
H(e^{Y_t}x_t)=e^{\phi_t}H(x_t), 
\end{equation}
with $\phi_t=\int_{t_0+h/2}^{t_0+(t-t_0)}\eta(s)ds$.
\end{theorem}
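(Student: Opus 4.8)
The plan is to pass to the exponentially transformed polynomial $v(t)$ of Definition \ref{Def2-coll-polynomial-modify} and reduce the claimed dissipation relation for $x$ to an \emph{exact energy conservation} statement for $v$. By Remark \ref{remark-EP-damped-def} the stages satisfy $v(t_0+\tau h)=e^{Y_\tau}X_\tau$, and in particular $v(t_0)=e^{Y_0}x_0$ and $v(t_0+h)=e^{Y_1}x_1$. Comparing \eqref{EEP-C} with Definition \ref{Def2-coll-polynomial-modify}, the polynomial $v$ is exactly the collocation polynomial of the undamped energy-preserving collocation method \eqref{EP-CRK} applied to $\dot v=f(v)=S\nabla H(v)$ with initial value $e^{Y_0}x_0$. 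Hence I would first aim to prove $H(v(t_0+h))=H(v(t_0))$, and only afterwards translate this back to $x$ using the structural hypothesis \eqref{constraint}.

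For the conservation step I would repeat Hairer's argument from \cite{hairer2010energy}. Writing the energy increment as a line integral along $v$,
\[
H(v(t_0+h))-H(v(t_0))=h\int_0^1 \nabla H\big(v(t_0+\tau h)\big)^{T}\,\dot v(t_0+\tau h)\,\mathrm{d}\tau .
\]
Since $v$ has degree $s$, its derivative $\dot v(t_0+\tau h)$ is a polynomial of degree at most $s-1$ and therefore equals its degree-$(s-1)$ Lagrange interpolant $\sum_{i=1}^s \ell_i(\tau)\dot v(t_0+c_i h)$ built on the $s$ nodes $c_i$. Substituting this expansion, exchanging sum and integral, and inserting the collocation conditions $\dot v(t_0+c_i h)=\frac{1}{b_i}\int_0^1\ell_i(\sigma)S\nabla H(v(t_0+\sigma h))\,\mathrm{d}\sigma$ turns the increment into $h\sum_i \frac{1}{b_i}\,\xi_i^{T}S\,\xi_i$ with $\xi_i=\int_0^1\ell_i(\tau)\nabla H(v(t_0+\tau h))\,\mathrm{d}\tau$. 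Skew-symmetry of $S$ forces every term $\xi_i^{T}S\xi_i$ to vanish, giving $H(v(t_0+h))=H(v(t_0))$.

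It then remains to convert this into the dissipation law for $x$. Applying the hypothesis \eqref{constraint} at $t=t_0+h$ and at $t=t_0$ gives $H(e^{Y_1}x_1)=e^{\phi_1}H(x_1)$ and $H(e^{Y_0}x_0)=e^{\phi_0}H(x_0)$; combined with $H(v(t_0+h))=H(v(t_0))$ and the endpoint identifications above this yields $e^{\phi_1}H(x_1)=e^{\phi_0}H(x_0)$, i.e. $H(x_1)=e^{\phi_0-\phi_1}H(x_0)$. Finally I would compute $\phi_0-\phi_1=\int_{t_0+h/2}^{t_0}\eta-\int_{t_0+h/2}^{t_0+h}\eta=-\int_{t_0}^{t_0+h}\eta(s)\,\mathrm{d}s$, so that $H(x_1)=e^{-\int_{t_0}^{t_0+h}\eta(s)\,\mathrm{d}s}H(x_0)$. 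This is precisely the one-step form of the exact decay $I(x(t))=e^{-\phi(t)}I(x(0))$ associated with \eqref{conformal condition}, so the scheme reproduces the continuous dissipation rate exactly; the differential statement $\frac{dH}{dt}=-\eta H$ in the theorem is to be read in this discrete, step-exact sense.

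The hard part is the conservation step, and in particular the role of $S$: the cancellation $\xi_i^{T}S\xi_i=0$ uses that a single skew-symmetric $S$ can be pulled out of the collocation condition, which is immediate when $S$ is constant but needs care if $S=S(v)$ depends on the state, since then the two integrals in the resulting double integral are weighted by $S$ evaluated at different arguments. I would therefore either restrict to constant (or frozen) $S$, as is natural for the semidiscrete Burgers and KdV operators used later, or replace the pointwise $S$ by an averaged surrogate so that the skew-symmetric cancellation survives. A secondary point to check carefully is that the interpolation identity for $\dot v$ together with the collocation conditions is genuinely consistent with the continuous-stage coefficients $A_{\tau,\sigma}$ of \eqref{EEP-C}, i.e. that Definition \ref{Def2-coll-polynomial-modify} and scheme \eqref{EEP-C} define one and the same polynomial $v$.
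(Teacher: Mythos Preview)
Your proposal is correct and follows essentially the same route as the paper: both reduce to showing $H(v(t_0+h))=H(v(t_0))$ for the polynomial $v$ of Definition~\ref{Def2-coll-polynomial-modify} via the line-integral identity, Lagrange interpolation of $\dot v$, and the skew-symmetry cancellation $\xi_i^TS\xi_i=0$, then invoke \eqref{constraint} to obtain $H(x_1)=e^{\phi_{t_0}-\phi_{t_1}}H(x_0)$. Your added caution about state-dependent $S$ is exactly the point the paper defers to Remark~\ref{remark-S}, where $S$ is frozen at a midpoint value to keep the cancellation valid.
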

\begin{proof}
By the definition of energy-preserving collocation methods in \ref{Def2-coll-polynomial-modify}, we obtain 
\begin{equation*}
\begin{split}
&H(e^{Y_1}x_1)-H(e^{Y_0}x_0)\\
&=H\left(v\left(t_0+h\right)\right)-H\left(v\left(t_0\right)\right)\\
&=h \int_0^1 \dot{v}\left(t_0+\tau h\right)^{\top} \nabla H\left(v\left(t_0+\tau h\right)\right) \mathrm{d} \tau \\
&=h \sum_{i=1}^s \frac{1}{b_i} \int_0^1 \ell_i(\tau) \nabla H\left(v\left(t_0+\tau h\right)\right)^{\top} \mathrm{d} \tau S^{\top} \int_0^1 \ell_i(\tau) \nabla H\left(v\left(t_0+\tau h\right)\right) \mathrm{d} \tau\\
&=0.
\end{split}
\end{equation*}
This implies 
\begin{equation*}
H(x_1)=e^{\phi_{t_0}-\phi_{t_1}}H(x_0),
\end{equation*}
whenever \eqref{constraint} is satisfied.

\end{proof}

\begin{remark}
The conditions in the above energy preservation theorem are not trivial, similarly as demonstrated in \cite{moore2021exponential}. For a general energy and matrix $D(t)$, the proposed exponential  method preserves  the energy along a transformed solution, i.e., 
$H(e^{Y_1}x_1)=H(e^{Y_0}x_0)$, which is equivalent to 
\begin{equation*}\label{EP-law-num}
\frac{d}{d t} H(e^{\int_{t_0+h/2}^{t} D(s) d s} x(t))=(\nabla H(e^{ \int_{t_0+h/2}^{t} D(s) d s} x(t)))^T e^{\int_{t_0+h/2}^{t} D(s) d s}(\dot{x}+D(t) x)=0.
\end{equation*}
But the true energy conservation property follows 
\begin{equation*}\label{EP-law-true}
(\nabla H(x(t)))^T(\dot{x}+D(t) x)=0.
\end{equation*}
Although for general case the the proposed method dose not preserve  the true energy conservatin property, it keeps a nice energy balance property that is close to the exact one.
\end{remark}

\begin{remark}
If an energy-preserving collocation method \eqref{EP-CRK} for Hamiltonian system \eqref{H-ODE} preserves another invariant $I(x)$ other than the energy function $H(x)$, the corresponding exponential  energy-preserving collocation method also preserves the structure $\frac{dI(x)}{dt}=-\eta(t)I(x)$, 
as long as  $I$ satisfies $I(e^{Y_t}x_t)=e^{\phi_t}I(x_t)$.
\end{remark}

\begin{remark}\label{remark-S}
If the skew-symmetric matrix $S$ depends on time, e.g., $S=S(t,x)$, the above proposed methods still work, but we need to consider a discretization of $S(t,x)$ to keep its skew symmetry. For example, we can discretize it as $S(\frac{t_0+t_{1}}{2},\frac{e^{Y_0}x_0+e^{Y_1}x_{1}}{2})$.
\end{remark}

\begin{theorem}\label{EEP-C-symmetric}
The exponential  energy-preserving collocation method \eqref{EEP-C} is symmetric when the collocation points are symmetric, i.e., $c_{s+1-i}=1-c_i$, for all $i=1,\cdots,s$.
\end{theorem}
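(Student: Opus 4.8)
The plan is to show that the one-step map defined by \eqref{EEP-C} coincides with its own adjoint, which is precisely the statement of symmetry: writing $\Phi_h$ for the map $x_0\mapsto x_1$, I must verify that applying the scheme with step $-h$ from the point $x_1$ at time $t_0+h$ recovers $x_0$ at time $t_0$. The decisive structural feature I would exploit is that the reference point $t_0+h/2$ used in $Y(t)=\int_{t_0+h/2}^t D(\tau)\,d\tau$ is exactly the midpoint of the step, hence invariant under the reversal $(t_0,h)\mapsto(t_0+h,-h)$. Consequently the reversed step uses the \emph{same} function $Y$, so that with the notation $Y_\alpha:=Y(t_0+\alpha h)$ one has $\hat Y_\sigma=Y_{1-\sigma}$, $\hat Y_0=Y_1$, and $\hat Y_1=Y_0$. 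This is what makes the exponential prefactors interchange cleanly under reversal rather than obstructing symmetry.

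The main algebraic ingredient, and the step I expect to be the chief obstacle, is the reflection identity for the Runge--Kutta kernel,
\[
A_{\tau,\sigma}+A_{1-\tau,\,1-\sigma}=1 .
\]
To establish it I would first translate the node symmetry $c_{s+1-i}=1-c_i$ into the symmetry of the Lagrange basis, $\ell_{s+1-i}(1-\tau)=\ell_i(\tau)$, together with $b_{s+1-i}=b_i$; both follow by the substitution $c_j\mapsto 1-c_j=c_{s+1-j}$ inside the defining products. Substituting these into $A_{\tau,\sigma}=\sum_i b_i^{-1}\big(\int_0^\tau \ell_i\big)\ell_i(\sigma)$, reindexing $i\mapsto s+1-i$, and applying $\alpha\mapsto1-\alpha$ in the inner integral converts $\int_0^{1-\tau}\ell_{s+1-i}$ into $b_i-\int_0^\tau\ell_i$; after invoking $\sum_i\ell_i(\sigma)=1$ the identity drops out. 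This is where the hypothesis on the nodes is actually consumed, and care is needed to track the reflection simultaneously through the integration limit and the stage index.

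With the kernel identity in hand, I would write out the reversed scheme, substitute $\hat x_0=x_1$, $\hat h=-h$, and the relations for $\hat Y$ above, and perform the change of variable $\sigma\mapsto1-\sigma$ in every integral while relabelling the continuum of stages by $\tau\mapsto1-\tau$. The update line then collapses to
\[
e^{Y_1}x_1=e^{Y_0}x_0+h\int_0^1 f\!\left(e^{Y_\sigma}\hat X_{1-\sigma}\right)d\sigma,
\]
which matches the original update in \eqref{EEP-C} under the identification $\hat X_{1-\tau}=X_\tau$. To close the argument I would verify this identification against the stage equations: substituting the update into the reflected stage relation and using $1-A_{1-\tau,\,1-\sigma}=A_{\tau,\sigma}$ reduces the reversed stage exactly to the defining equation for $X_\tau$. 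Since the stage system produced by reversal is identical to that of \eqref{EEP-C}, the reversed step returns $x_0$, so $\Phi_h$ equals its adjoint and the method is symmetric.
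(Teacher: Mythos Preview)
Your proposal is correct and follows essentially the same route as the paper's proof: reverse the scheme, invoke the kernel reflection identity $A_{\tau,\sigma}+A_{1-\tau,1-\sigma}=1$ (which the paper simply asserts for symmetric nodes), and identify the reversed stage system with the original via $\hat X_{1-\tau}=X_\tau$. Your explicit observation that the midpoint reference $t_0+h/2$ in $Y$ is invariant under the reversal $(t_0,h)\mapsto(t_0+h,-h)$, and your derivation of the kernel identity from the Lagrange-basis symmetries $\ell_{s+1-i}(1-\tau)=\ell_i(\tau)$ and $b_{s+1-i}=b_i$, supply details the paper leaves implicit.
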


\begin{proof}
By exchanging $t_0, y_0$ and $t_1, y_1$ and replacing $h$ by $-h$ in \eqref{EEP-C}, we obtain 
\begin{subequations}
\label{EEP-C-c}
 \begin{align}
  x_0&=e^{-Y_0}\big(e^{Y_1}x_1-h \int_0^1  f\left(e^{Y_{1-\sigma}}X_{1-\sigma}\right) \mathrm{d} \sigma\big)\label{eq11} \\
  X_{1-\tau}&=e^{-Y_{1-\tau}}\big(e^{Y_1}x_1-h \int_0^1 A_{\tau, \sigma} f\left(e^{Y_{1-\sigma}}X_{1-\sigma}\right) \mathrm{d} \sigma\big).\label{eq12}
 \end{align}
\end{subequations}
Multiplying both sides of equation \eqref{eq11} by $e^{Y_0}$ and both sides of \eqref{eq12} by $e^{Y_{1-\tau}}$, we obtain 
\begin{subequations}
\label{EEP-C-sym}
 \begin{align}
  e^{Y_0}x_0&=e^{Y_1}x_1-h \int_0^1  f\left(e^{Y_{1-\sigma}}X_{1-\sigma}\right) \mathrm{d} \sigma\label{sym-eq11} \\
  e^{Y_{1-\tau}}X_{1-\tau}&=e^{Y_1}x_1-h \int_0^1 A_{\tau, \sigma} f\left(e^{Y_{1-\sigma}}X_{1-\sigma}\right) \mathrm{d} \sigma.\label{sym-eq12}
 \end{align}
 \end{subequations}
Combining equations \eqref{sym-eq11} and \eqref{sym-eq12}, we obtain
 \begin{equation}\label{sym-eq2}
  e^{Y_{1-\tau}}X_{1-\tau}=e^{Y_0}x_0+h \int_0^1\big(1-A_{\tau, \sigma}\big) f\left(e^{Y_{1-\sigma}}X_{1-\sigma}\right) \mathrm{d} \sigma.
 \end{equation}
 Since symmetric nodes $c_1,\cdots, c_s$ guarantees $A_{\tau, \sigma}+A_{1-\tau, 1-\sigma}=1$, the above equation \eqref{sym-eq2} can be rewritten as
  \begin{equation}\label{sym-eq3}
  \begin{split}
  e^{Y_{1-\tau}}X_{1-\tau}&=e^{Y_0}x_0+h \int_0^1 A_{1-\tau, 1-\sigma} f\left(e^{Y_{1-\sigma}}X_{1-\sigma}\right) \mathrm{d} \sigma\\
  &=e^{Y_0}x_0+h \int_0^1 A_{1-\tau,\sigma} f\left(e^{Y_{\sigma}}X_{\sigma}\right) \mathrm{d} \sigma.
  \end{split}
 \end{equation}
 Equations \eqref{sym-eq3} and \eqref{sym-eq11} indicate that scheme \eqref{EEP-C} is  symmetric when symmetric nodes are utilized.

\end{proof}

\section{Numerical experiments}\label{Numerical-test}
Previous studies have highlighted the advantage of exponential discrete gradient methods over other structure-preserving methods such as the standard discrete gradient method, symplectic methods (e.g., the implicit midpoint method), and conformal symplectic methods (e.g., the implicit conformal symplectic method). Therefore, our numerical experiments primarily aim to validate the high order and the conservation of energy structure of our proposed method. In this section, we apply the proposed Exponential Energy dissipation-Preserving Collocation methods  (EEPC) of different orders to Hamiltonian differential equations including damped Burger's equation and KdV equation, with different damping terms.  The solution of the eighth-order method serves as a reference solution to test the accuracy of the proposed methods.

For the damped Hamiltonian system \eqref{H-damp-ODE}, a conformal invariant $I$ (including $H$) satisfying the assumption in Theorem \ref{Structure-preservation}  preserves the dissipation rate
$$
I(x(t))=e^{-\int_0^{t}\eta(s)ds} I\left(x(0)\right),
$$
where $\eta(s)$ is positive, ensuring that the exponential integrator maintains the dissipation of this function at each time step, i.e.
$$
I\left(x_{n+1}\right)<I\left(x_n\right).
$$
If the form of $\eta(s)$ for a conformal invariant is known, the numerical dissipative rate can be evaluated using
$$
R_I=\ln \left(\frac{I\left(x_{n+1}\right)}{I\left(x_{n}\right)}\right)+\int_{t_n}^{t_{n+1}}\eta(s) ds,
$$
where $R_I$ should be machine accurate. However, in most cases, the exact form of $\eta(s)$ is unknown. Instead, we use the following residual,  similar to that defined in \cite{moore2021exponential}, to measure how well a method preserves the dissipation rate of a function $I(x)$ (including the energy function $H(x)$)
\begin{equation}\label{dissp-rate-average}
R_I=\ln \left(\frac{I\left(x_{n+1}\right)}{I\left(x_{n}\right)}\right)+\Delta t \sum_{k=1}^{k=N}\frac{\gamma_k}{N}.    
\end{equation}
$R_I$ remains a small bounded number when $\gamma_k$ (for $k=1,\cdots, N$) are small.

We represent the centered finite difference discretization of the first and second-order derivative operators $\partial_x$ and $\partial_{x x}$, respectively, under periodic boundary conditions using the matrices $D_1 \in \mathbb{R}^{N_1 \times N_1}$ and $D_2 \in \mathbb{R}^{N_1 \times N_1}$:
\begin{equation*}
\begin{split}
D_1&=\frac{1}{2 \Delta x}\left(\begin{array}{ccccc}
0 & 1 & & & -1 \\
-1 & 0 & 1 & & \\
& \ddots & \ddots & \ddots & \\
& & -1 & 0 & 1 \\
1 & & & -1 & 0
\end{array}\right),\\
D_2&=\frac{1}{\Delta x^2}\left(\begin{array}{ccccc}
-2 & 1 & & & 1 \\
1 & -2 & 1 & & \\
& \ddots & \ddots & \ddots & \\
& & 1 & -2 & 1 \\
1 & & & 1 & -2
\end{array}\right),
\end{split}
\end{equation*}
where $N_1$ is determined by the number of grids in spatial domain.
\subsection{Damped Burger's equation}\label{}
The damped Burger's equation introduced in \cite{bhatt2016second} can be written as a damped Hamiltonian PDE in the form:
\begin{equation*}\label{damped-burger}
u_t=-\frac{\partial}{\partial x} \frac{\delta \mathcal{H}}{\delta u}-2 \gamma u, \quad \mathcal{H}(u)=\frac{1}{3} \int u^3 d x.
\end{equation*}
We consider periodic boundary conditions on the spatial interval $x\in [-L, L]$ with $L=\pi$. 
Using uniform spatial grids with grid size $\Delta x$ and denoting  the semi-discrete function value as  $\mathbf{u}=[u^1, u^2, \ldots, u^{N_1}]$, 
we discretize the energy as $H\left(\mathbf{u}\right)=\frac{1}{3}\sum_{j=1}^{N_1}(u^j)^3$ 
and employ a central difference for the first-order derivative $\partial_x$. 
The resulting  semi-discrete system is
\begin{equation}\label{damped-burger-semiH}
\dot{\mathbf{u}}=-\frac{1}{2} D_1\nabla H(\mathbf{u})-2 \gamma \mathbf{u}.
\end{equation}
Notice that $\mathbf{u}^TD_1\mathbf{u}=\boldsymbol{1}^TD_1\mathbf{u}^2$ for any vector $\mathbf{u}$, where $\boldsymbol{1}$ is a vector of size $1\times N_1$ with all elements equal to  1. This indicates that $M(\mathbf{u})=\sum_{k=1}^{k=N_1} u^k$ is a conformal invariant for the semi-discrete Hamiltonian system \eqref{damped-burger-semiH}, corresponding to the discretization of the linear conformal invariant (mass) $\int u dx$ of the original problem \cite{bhatt2016second}. The proposed methods applied to \eqref{damped-burger-semiH} should precisely preserve the dissipative rate, i.e.,   $R_M=\ln \left(\frac{M\left(\mathbf{u}^{n+1}\right)}{M\left(\mathbf{u}^n\right)}\right)+2\gamma \Delta t$ should be machine accurate, where $\Delta t$ is the time step size.

We consider an Gaussian distributed initial value $u(x,0)=e^{-x^2/2}/\sqrt{2\pi}$,  $\Delta t=0.009$ and $\Delta x=\pi/40$, with the integration interval $t\in [0, T]$ and $T=50$. The damped Burger's equation \eqref{damped-burger-semiH} corresponds to equation \eqref{H-damp-ODE}, where the diagonal matrix $D(t)$ has  constant equal diagonal elements.  We also consider more general cases with unequal constant diagonal elements and time-dependent diagonal elements to evaluate the performance of our methods.

\textbf{Case I: $\textbf{D(t)}$ with constant equal diagonal elements $\gamma=0.25$.}   We observe that the proposed energy dissipation-preserving collocation methods of different orders produce similar numerical solutions. Figure \ref{solution-burgers} shows the numerical solution obtained by the second-order method.  As illustrated, the amplitude of the numerical solution diminishes over time, indicating rapid damping. Figure   \ref{energyDecay-burgers} shows the exponential decay of both the Hamiltonian $H$ and  mass $M$ over time. Figure \ref{E_decay_rate_burgers} demonstrates that the proposed methods precisely preserve the dissipation rate of mass, while the dissipation rate  of energy remains bounded and oscillatory throughout the integration. Figure   \ref{order_burgers} confirms the order of the proposed methods.
\begin{figure}[H]
     \centering
     \begin{subfigure}[b]{0.49\textwidth}
         \centering
\includegraphics[width=\textwidth]{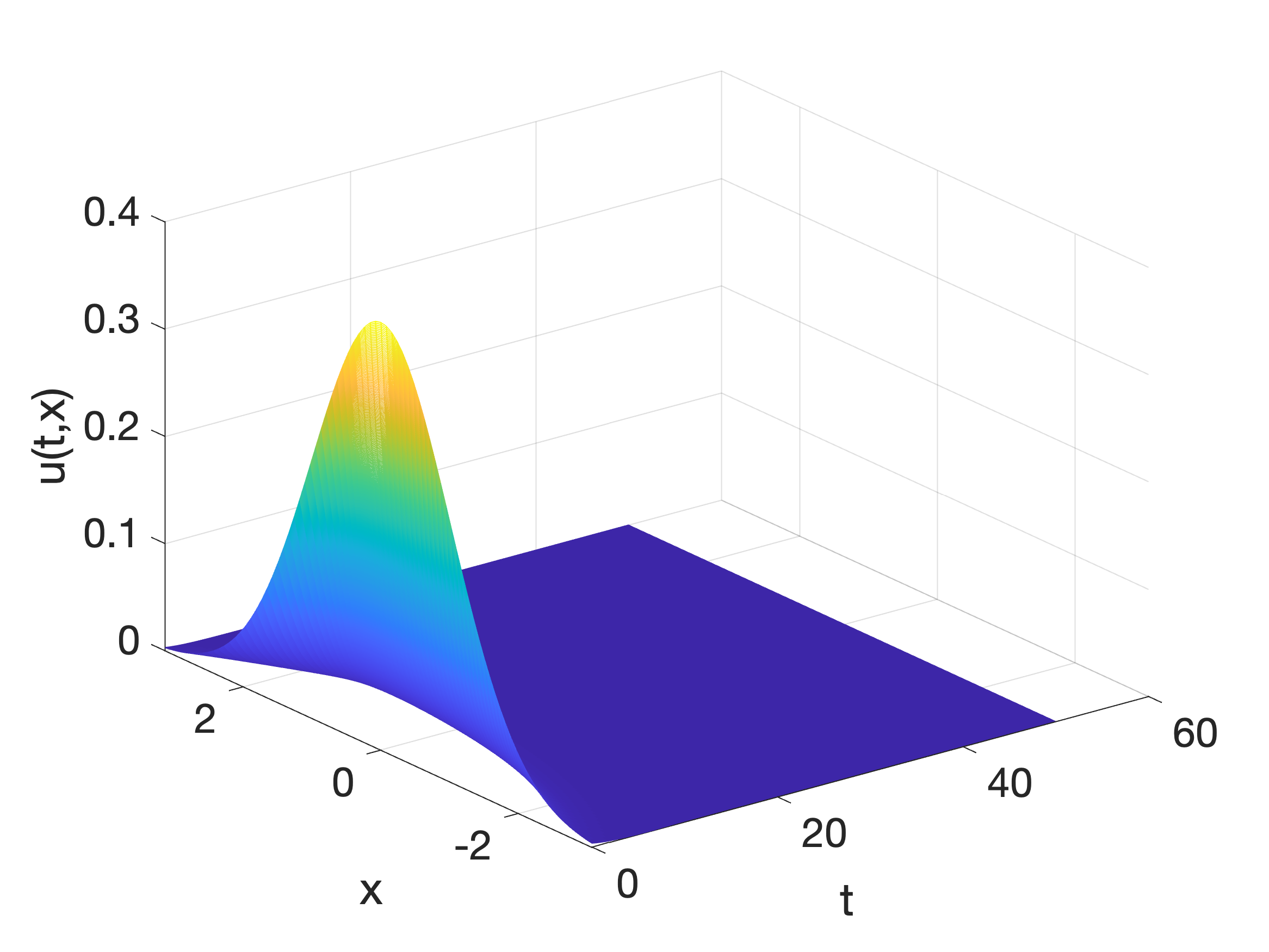}
         \caption{Solution}
         \label{solution-burgers}
     \end{subfigure}
     \hfill
       \begin{subfigure}[b]{0.49\textwidth}
         \centering
\includegraphics[width=\textwidth]{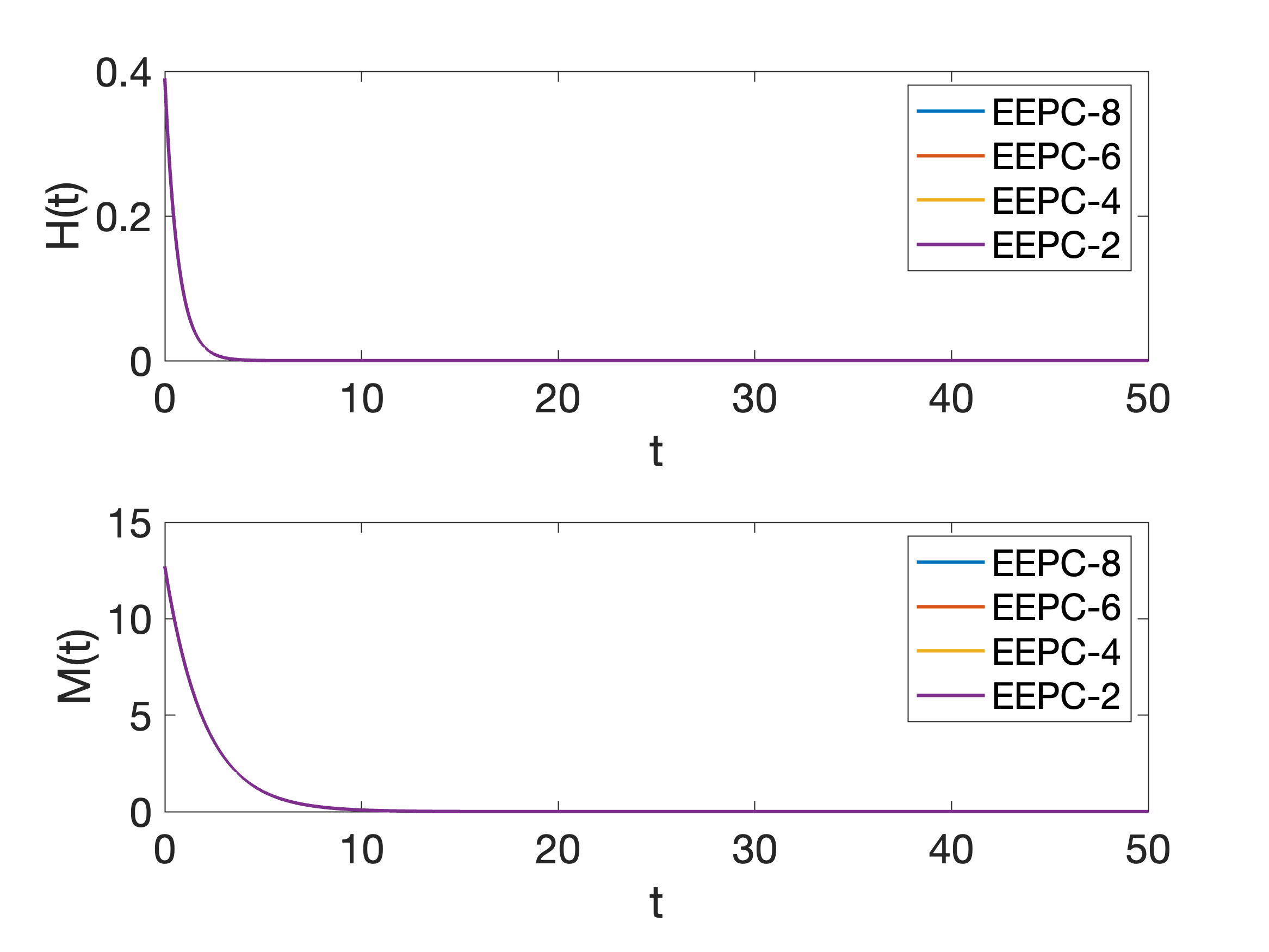}
         \caption{Decay of energy}
         \label{energyDecay-burgers}
     \end{subfigure}
       \hfill
          \begin{subfigure}[b]{0.49\textwidth}
         \centering
\includegraphics[width=\textwidth]{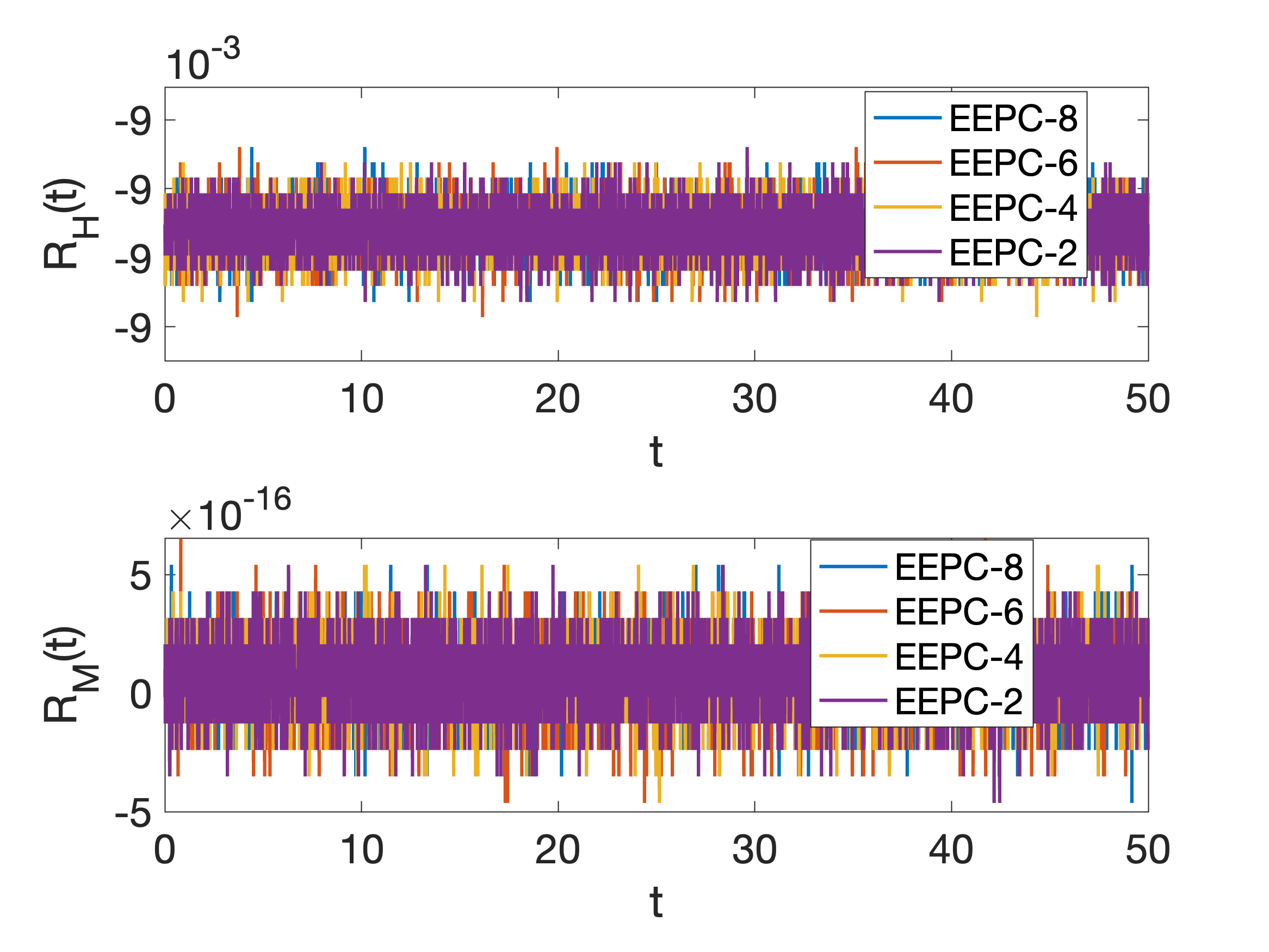}
         \caption{Conservation of energy decay rate}
         \label{E_decay_rate_burgers}
     \end{subfigure}
     \hfill
       \begin{subfigure}[b]{0.48\textwidth}
         \centering
\includegraphics[width=\textwidth]{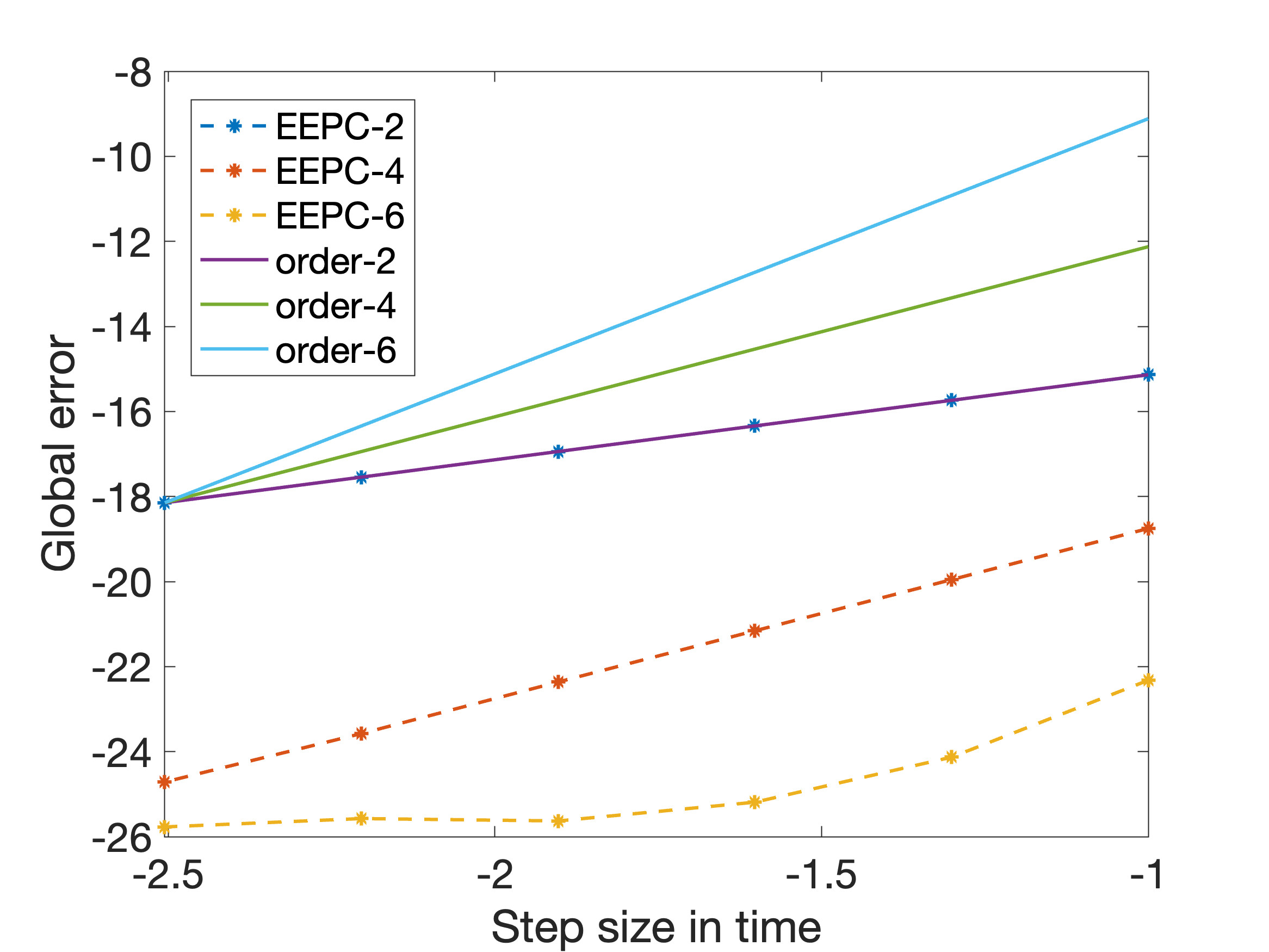}
         \caption{Order in time}
         \label{order_burgers}
     \end{subfigure}
     \hfill
        \caption{Plots of the exponential energy dissipation-preserving collocation methods of various orders for the Burger's equation \eqref{damped-burger-semiH} with $\gamma=0.25$ and $\Delta x=\pi/40$. Figures  \ref{solution-burgers}, \ref{energyDecay-burgers} and  \ref{E_decay_rate_burgers} utilize fixed time step $\Delta t=0.009$ is used.}
        \label{Burgers-gamma1}
\end{figure}

\textbf{Case II: $\textbf{D(t)}$ with constant unequal diagonal elements.}  
In this scenario, we introduce a random perturbation to each default diagonal element of $D(t)$ from Case I, within $10\%$ of the default values\footnote{Here, the random diagonal elements are $\gamma_k=0.25\times(1+2\times \text{rand}-1$) using \textit{Matlab} command.}.   Figure \ref{solution-burgers_gamma2} presents the numerical solution obtained using the proposed method\footnote{Noting that the methods of different orders produce similar numerical solutions, we only present the numerical solution by the second order method here.}. It is observed that the numerical solution exhibits reduced smoothness due to the unequal diagonal elements introduced by the random perturbations. The wave tends to damp over time, and negative values emerge during propagation. Figure   \ref{energyDecay-burgers_gamma2} shows the  decay of both energy $H$ and  mass $M$ over time. Figure \ref{E_decay_rate_burgers_gamma2} demonstrates that when the condition in Theorem  \ref{Structure-preservation} is not satisfied, the mass decay rate $R_M$ can not be  preserved. Figure   \ref{order_burgers_gamma2} confirms the order of the proposed methods.

\begin{figure}[H]
     \centering
     \begin{subfigure}[b]{0.49\textwidth}
         \centering
\includegraphics[width=\textwidth]{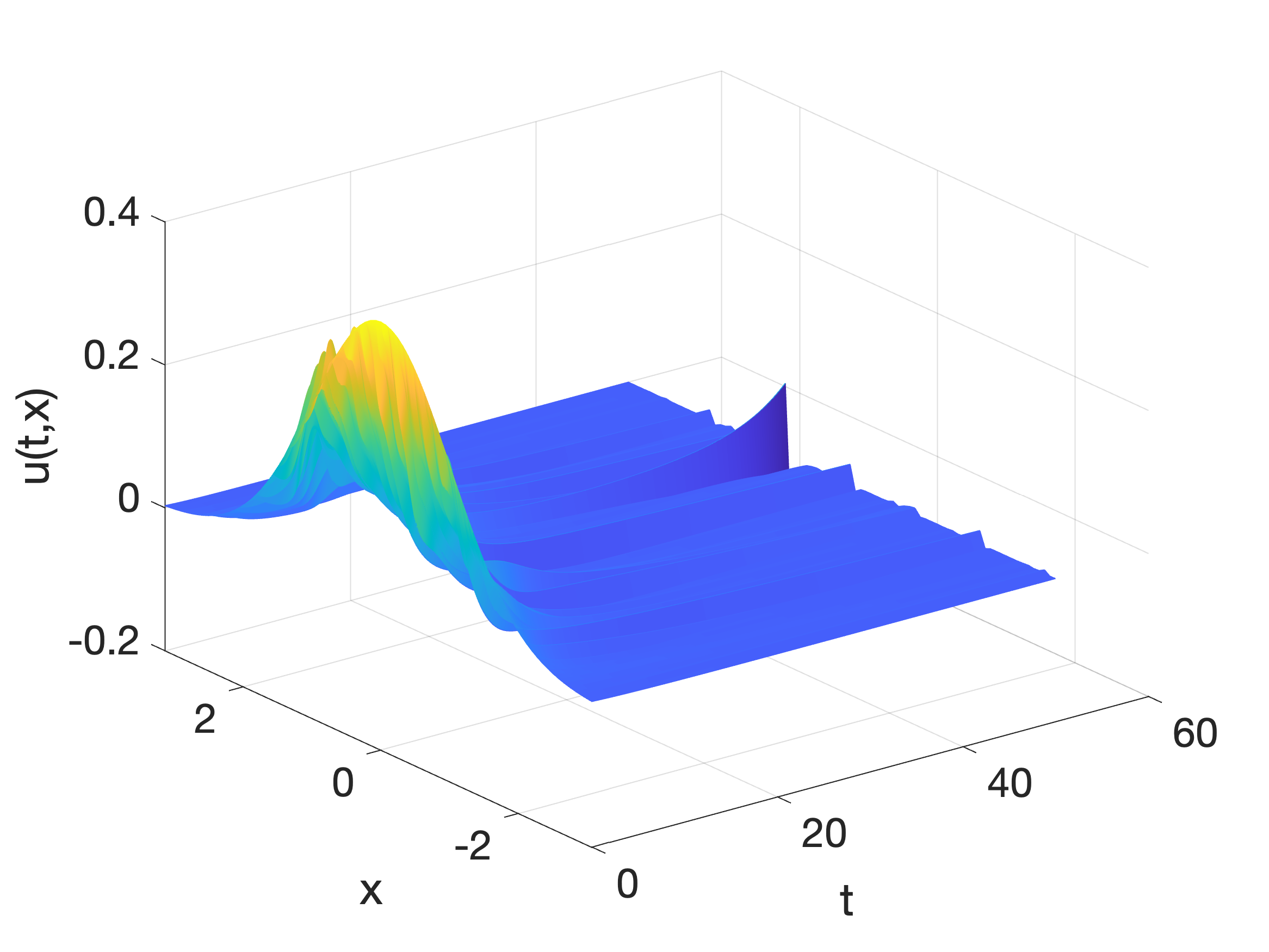}
         \caption{Solution}
         \label{solution-burgers_gamma2}
     \end{subfigure}
     \hfill
       \begin{subfigure}[b]{0.49\textwidth}
         \centering
\includegraphics[width=\textwidth]{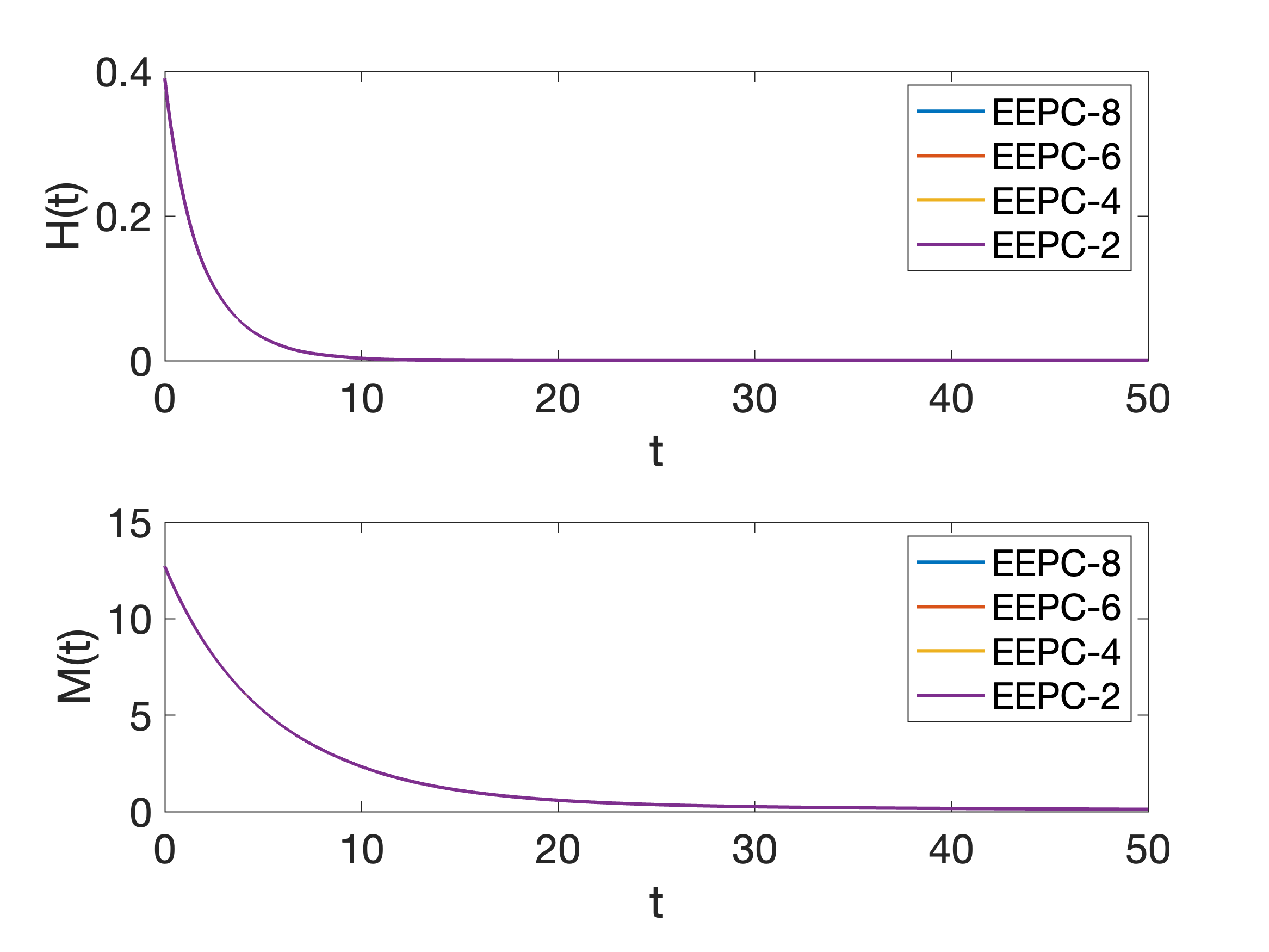}
         \caption{Decay of energy}
         \label{energyDecay-burgers_gamma2}
     \end{subfigure}
       \hfill
          \begin{subfigure}[b]{0.49\textwidth}
         \centering
\includegraphics[width=\textwidth]{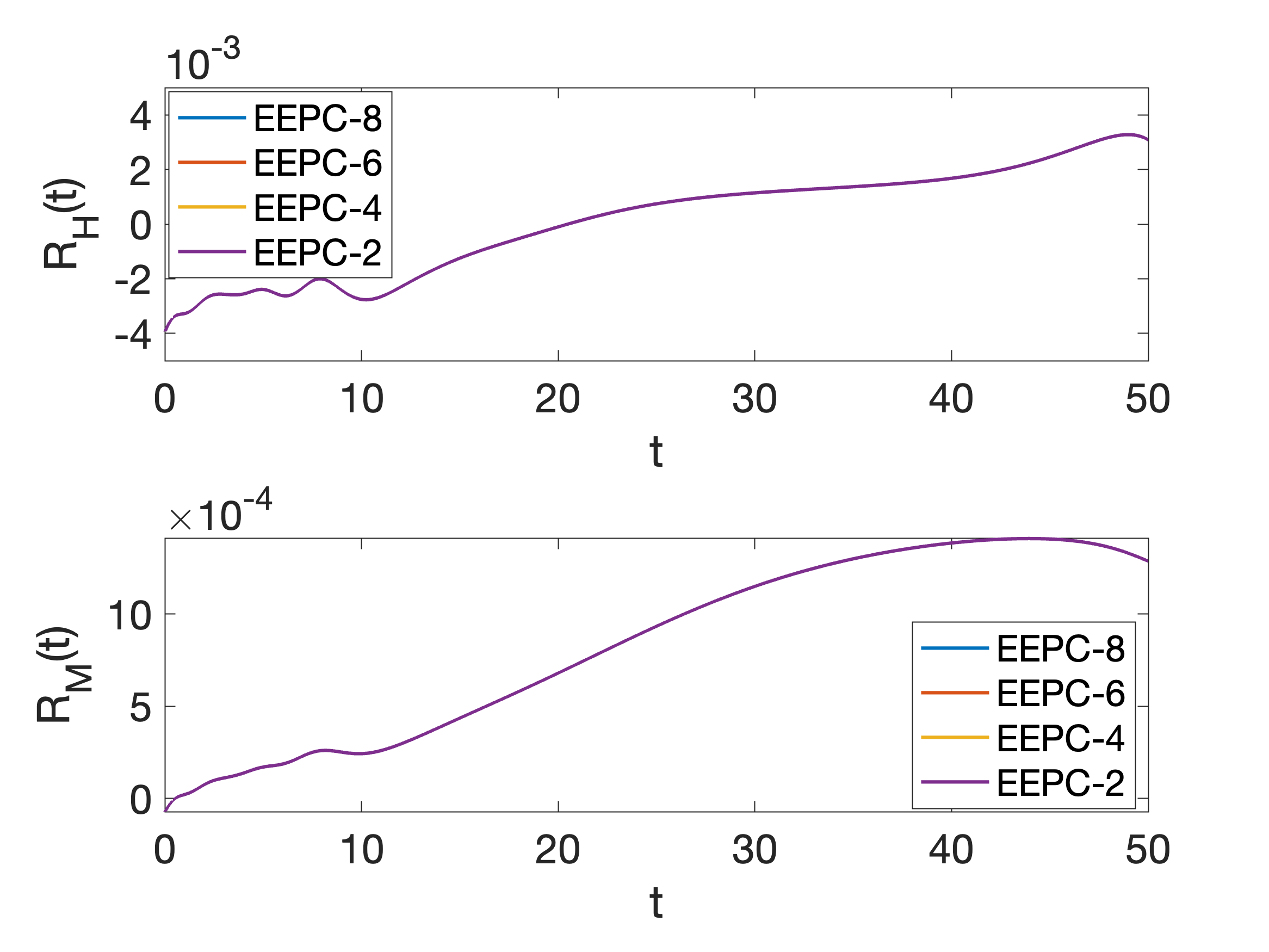}
         \caption{Conservation of energy decay rate}
         \label{E_decay_rate_burgers_gamma2}
     \end{subfigure}
     \hfill
       \begin{subfigure}[b]{0.48\textwidth}
         \centering
\includegraphics[width=\textwidth]{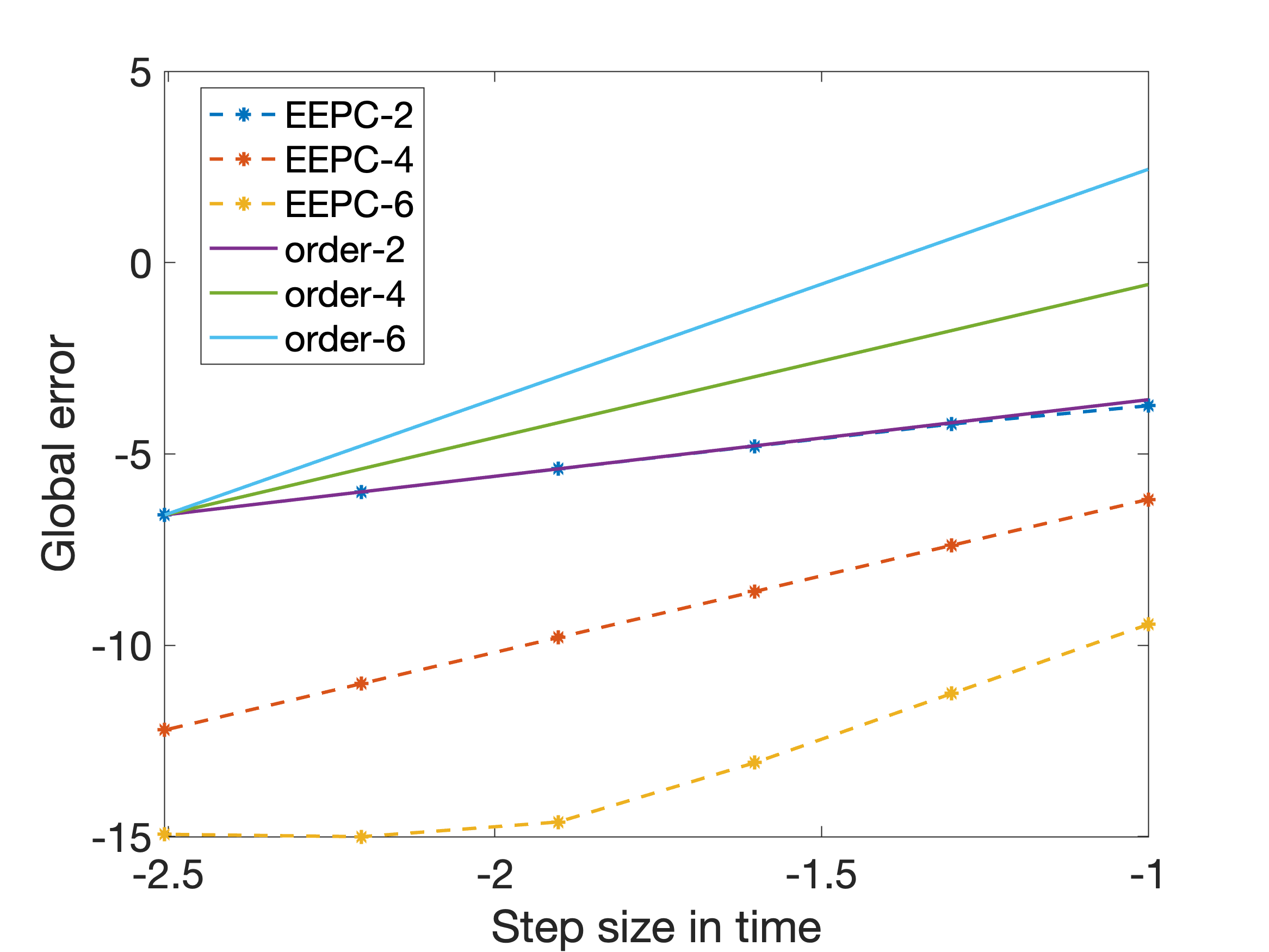}
         \caption{Order in time}
         \label{order_burgers_gamma2}
     \end{subfigure}
     \hfill
        \caption{Plots of the exponential energy dissipation-preserving collocation methods of various orders  for the Burger's equation with $D(t)$ having constant unequal diagonal elements and $\Delta x=\pi/40$. In Fig  \ref{solution-burgers_gamma2}, \ref{energyDecay-burgers_gamma2} and  \ref{E_decay_rate_burgers_gamma2}, fixed time step $\Delta t=0.009$ is used.}
        \label{Burgers-gamma2}
\end{figure}

\textbf{Case III: $\textbf{D(t)}$ with time-dependent equal diagonal elements.} We consider damped Burger's equation \ref{solution-burgers_gamma2} with damping factor $\gamma=e^{-t}$.  Figure \ref{solution-burgers_gamma3} reports the numerical solution of the second-order exponential energy-preserving collocation method, and we observe similar numerical solutions by methods of different orders. As seen in Fig  \ref{energyDecay-burgers_gamma3} and \ref{solution-burgers_gamma3}, both energy $H$ and  mass $M$ decay over time exponentially and the amplitude of the numerical wave solutions diminishes.  Figure \ref{E_decay_rate_burgers_gamma3} demonstrates that the proposed methods precisely preserve the dissipation rate of mass while the dissipation rate of energy remains bounded and oscillatory throughout the integration. Figure   \ref{order_burgers_gamma3} confirms the order of the proposed methods.

\begin{figure}[H]
     \centering
     \begin{subfigure}[b]{0.49\textwidth}
         \centering
\includegraphics[width=\textwidth]{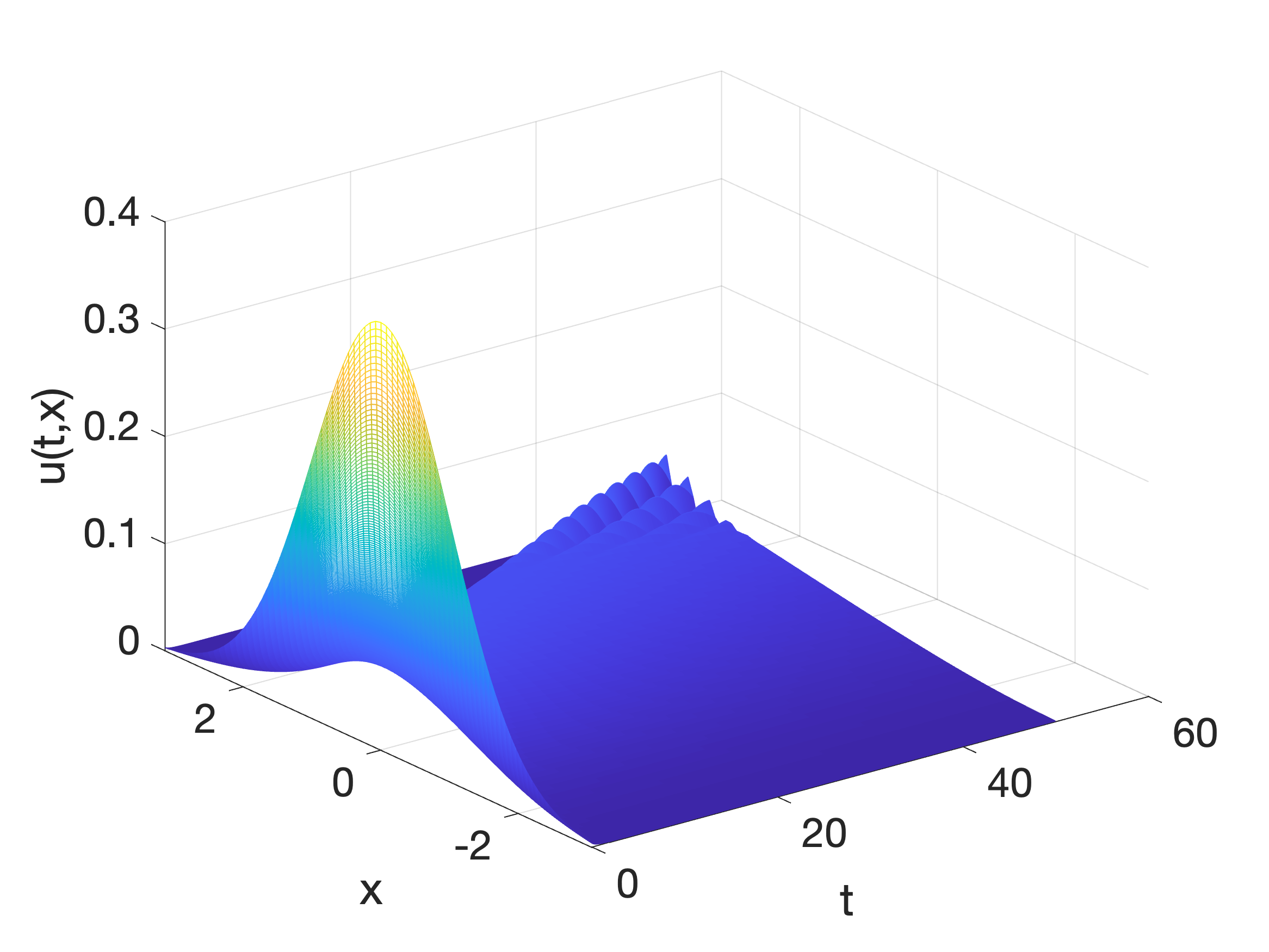}
         \caption{Solution}
         \label{solution-burgers_gamma3}
     \end{subfigure}
     \hfill
       \begin{subfigure}[b]{0.49\textwidth}
         \centering
\includegraphics[width=\textwidth]{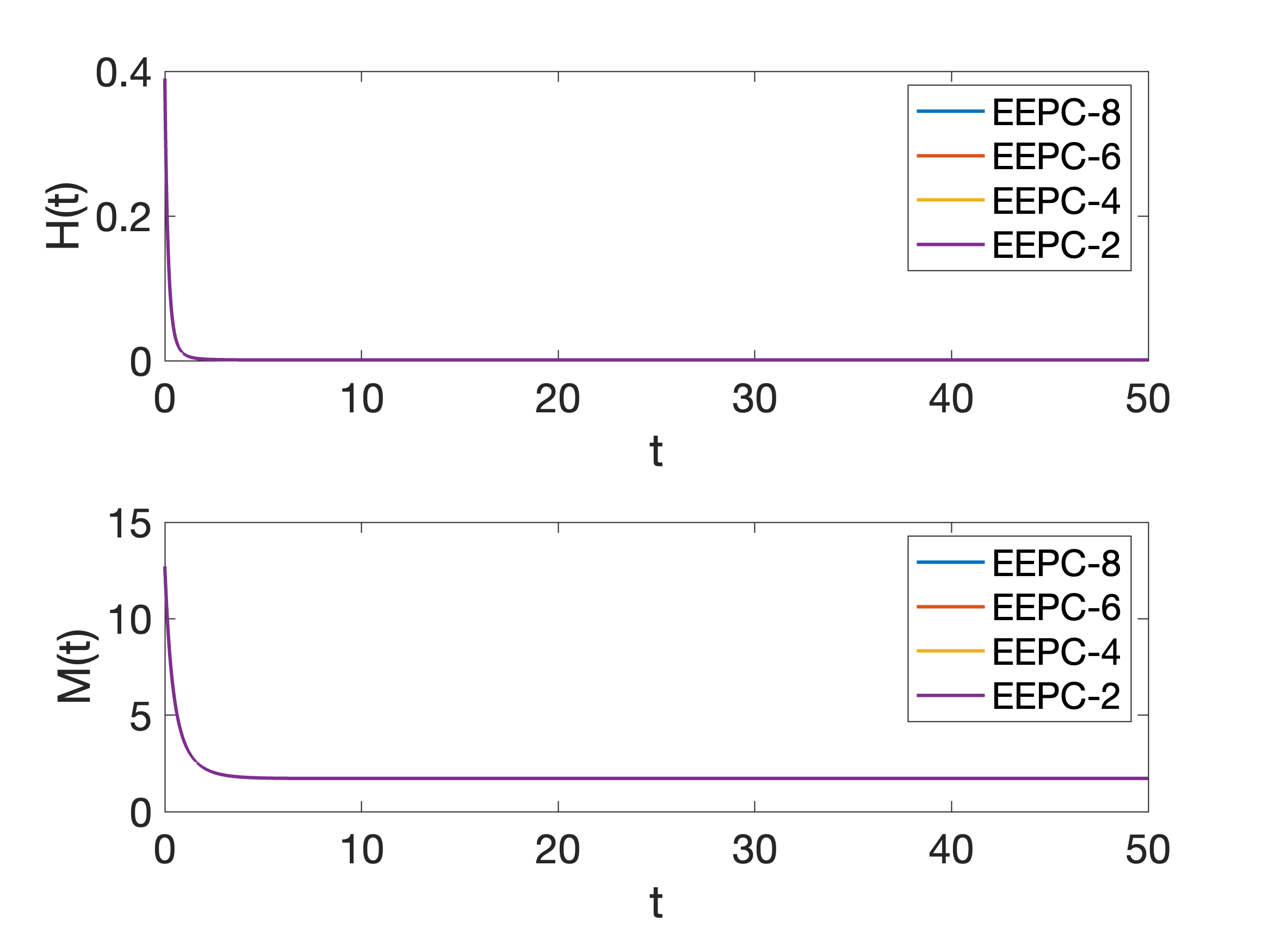}
         \caption{Decay of energy}
         \label{energyDecay-burgers_gamma3}
     \end{subfigure}
       \hfill
          \begin{subfigure}[b]{0.49\textwidth}
         \centering
\includegraphics[width=\textwidth]{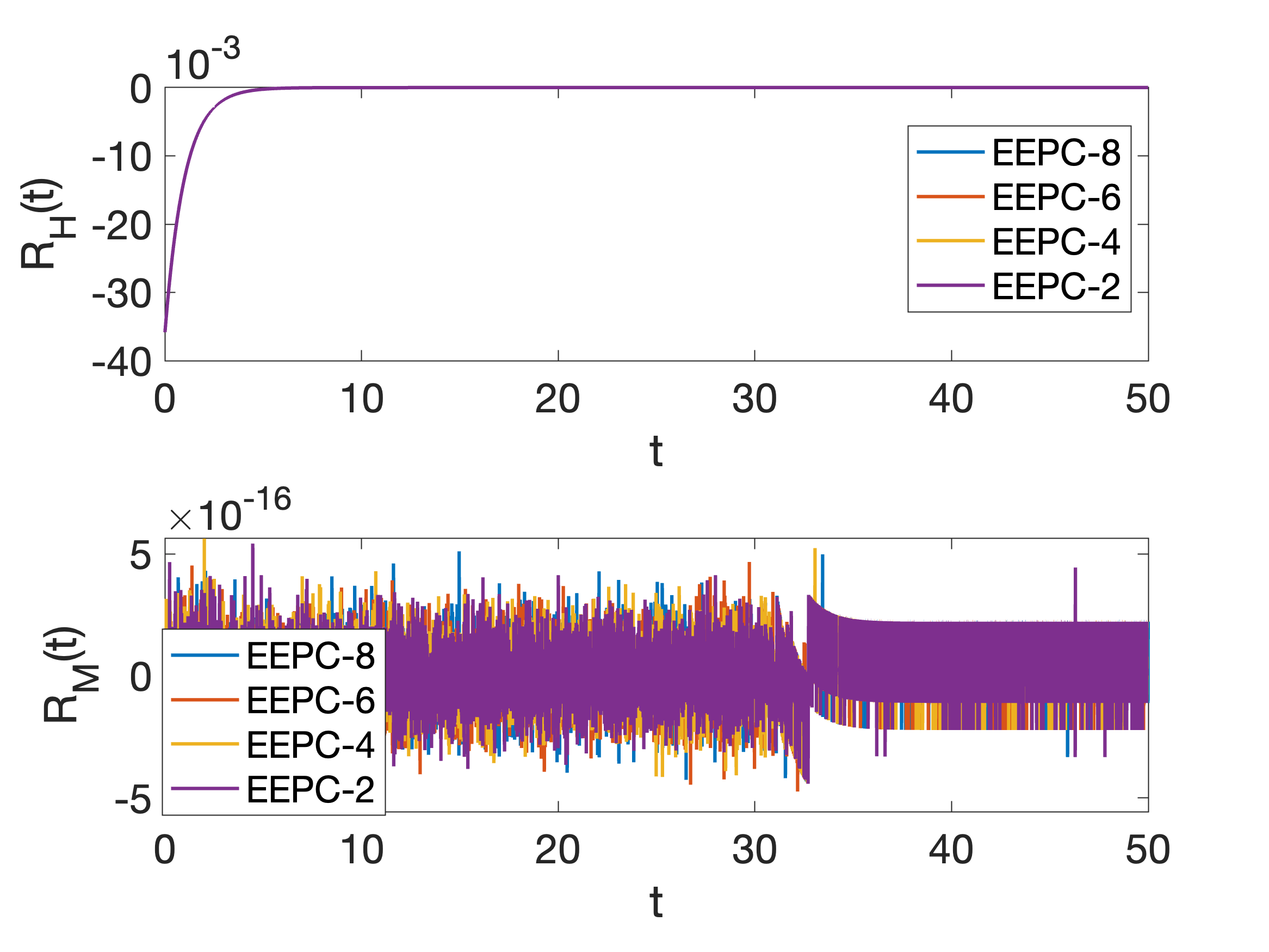}
         \caption{Conservation of energy decay rate}
         \label{E_decay_rate_burgers_gamma3}
     \end{subfigure}
     \hfill
       \begin{subfigure}[b]{0.48\textwidth}
         \centering
\includegraphics[width=\textwidth]{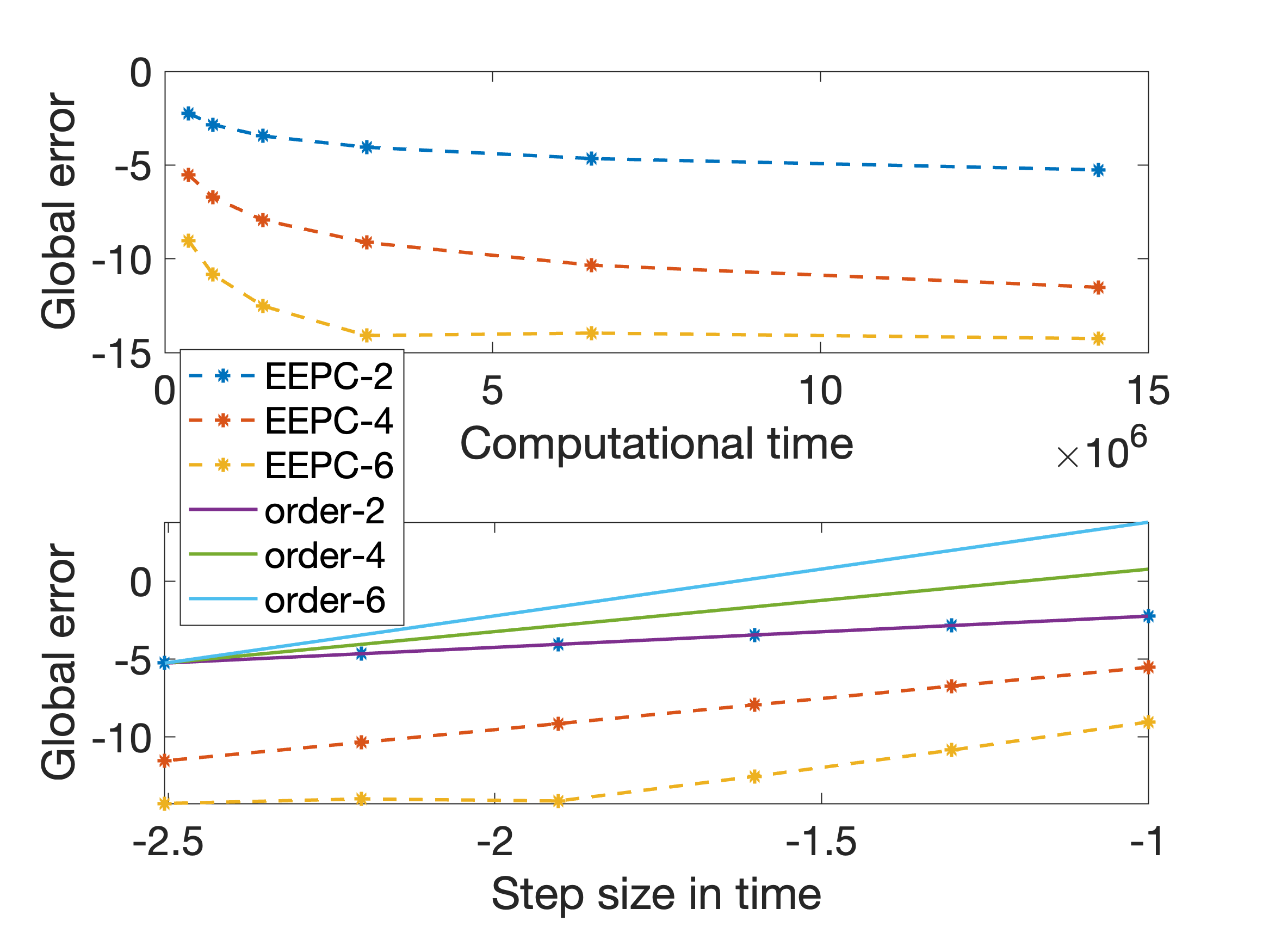}
         \caption{Order in time}
         \label{order_burgers_gamma3}
     \end{subfigure}
     \hfill
        \caption{Plots of the exponential energy dissipation-preserving collocation methods of various orders for the Burger's equation with time-dependent equal diagonal elements, and $\Delta x=\pi/40$. In Fig  \ref{solution-burgers_gamma3}, \ref{energyDecay-burgers_gamma3} and  \ref{E_decay_rate_burgers_gamma3}, fixed time step $\Delta t=0.009$ is used.}
        \label{Burgers-gamma3}
\end{figure}

\subsection{Damped KdV equation}

We consider the generalized KdV equation with Ekman damping, given by:
$$
u_t=2\alpha uu_x+\rho u_x+\nu u_{xxx}-2 \gamma u,
$$
where $\gamma, \alpha, \rho,  \nu $ are constant parameters \cite{bhatt2021projected}. This equation is often used in modeling of long-wavelength water waves and ion-acoustic waves. 
In the absence of the damping term ($\gamma=0$),  the KdV equation exhibits a bi-Hamiltonian structure 
$$
u_t=\mathcal{J}_1 \frac{\delta \mathcal{H}_1}{\delta u}=\mathcal{J}_2 \frac{\delta \mathcal{H}_2}{\delta u},
$$
with 
$$
\mathcal{J}_1=\partial_x,\quad \mathcal{H}_1=\int_{-L}^{L}\frac{\alpha}{3}u^3+\frac{\rho}{2}u^2-\frac{\nu}{2}{u_x}^2 dx,
$$
and 
$$
\mathcal{J}_2=\nu\partial_x^3+\frac{2\alpha}{3} (u \partial_x+\partial_x u)+\rho \partial_x, \quad \mathcal{H}_2=\int_{-L}^{L} \frac{1}{2} u^2 d x.
$$
Here, $\frac{\delta \mathcal{H}}{\delta u}$ denotes the functional derivative of the Hamiltonian $\mathcal{H}$ with respect to $u$. We set $\alpha=-3/8$, $\rho=-10^{-1}$, and $\nu=-10^{-5}$, and consider the spatial domain $x \in[-L, L]$ with $L=4$ and the integration interval $t\in [0, T]$ with $T=20$.  The spatial and temporal domains are discretized using equal step sizes  with  $\Delta x=0.0808$ and  $\Delta t=0.009$.  The spatial grid points  are $x_i=-L+i \Delta x, i \in\{0,1,2, \ldots, N_1\}$ and the vector $\mathbf{u}$  is denoted as  $\mathbf{u}=[u^1, u^2, \ldots, u^{N_1}]$. 

\subsubsection{Semi-discrete system based on the first Hamiltonian form}
When we consider a semi-discrete system based on the first Hamiltonian form, the condition in Theorem \ref{Structure-preservation} is not satisfied. Consequently, the proposed methods do not exactly preserve the energy dissipation rate; however, the error should remain bounded and oscillatory for a diagonal matrix  $D(t)$ with small diagonal elements. The energy is discretized as: $$\mathcal{H}\left(\mathbf{u}\right)=\sum_{j=1}^{N_1} \big(\frac{\alpha}{3}(u^j)^3+\frac{\rho}{2}(u^j)^2-\frac{\nu}{2}\frac{u^{j+1}-u^{j}}{\Delta x}\big)\Delta x,$$ 
and 
$\partial_x$ is approximated by $D_1$. Thus, the semi-discrete system  is given by:
\begin{equation}\label{KdV-H1}
\dot{\mathbf{u}}= D_1\nabla H_1(\mathbf{u})-2 \gamma \mathbf{u},
\end{equation}
with $H_1(\mathbf{u})=\frac{\alpha}{3}\mathbf{u}^3+\frac{\rho}{2}\mathbf{u}^2+\nu D_2 \mathbf{u}$. 

Figure \ref{kdv-gamma1_Hamil1} presents the numerical results for the damped KdV equation \eqref{KdV-H1} using the proposed energy-preserving collocation methods of different orders. We observe that methods of different orders produce similar numerical solutions. Figure  \ref{solution-kdv_Hamil1} shows the evolution of numerical wave over time by the second-order method. Figure   \ref{energyDecay-kdv_Hamil1} illustrates the  exponential decay of both $H_1$ and  $H_2$ ($H_2(u)=\mathbf{u}^2/2$ is obtained by the discretization of Hamiltonian $\mathcal{H}_2$) over time.  Figure \ref{E_decay_rate_kdv_Hamil1} demonstrates that the error of the numerical energy dissipation rate remains bounded and oscillatory throughout the integration. Figure   \ref{order_kdv_Hamil1} confirms the order of the proposed methods.
\begin{figure}[H]
     \centering
     \begin{subfigure}[b]{0.49\textwidth}
         \centering
\includegraphics[width=\textwidth]{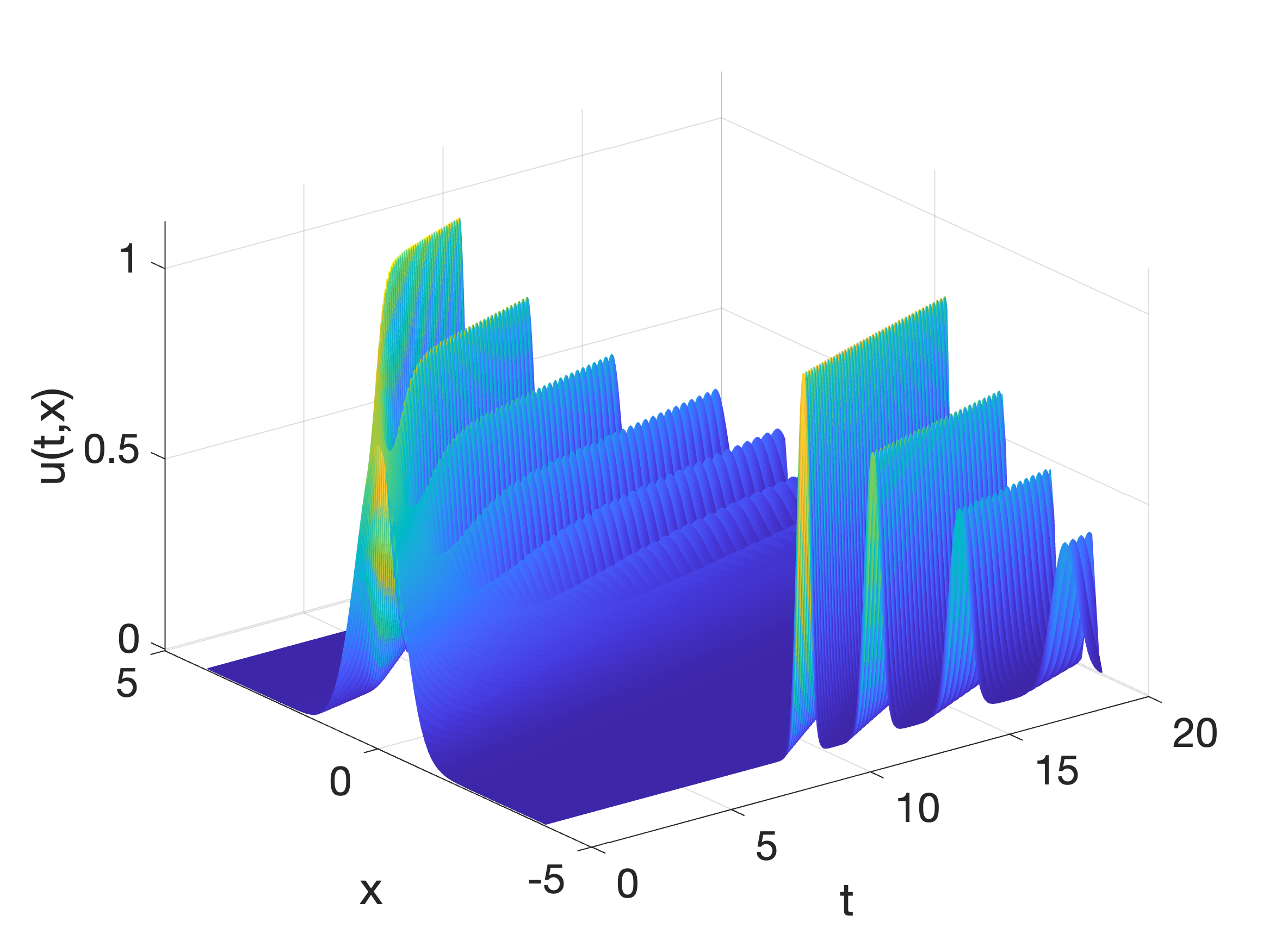}
         \caption{Solution}
         \label{solution-kdv_Hamil1}
     \end{subfigure}
     \hfill
       \begin{subfigure}[b]{0.49\textwidth}
         \centering
\includegraphics[width=\textwidth]{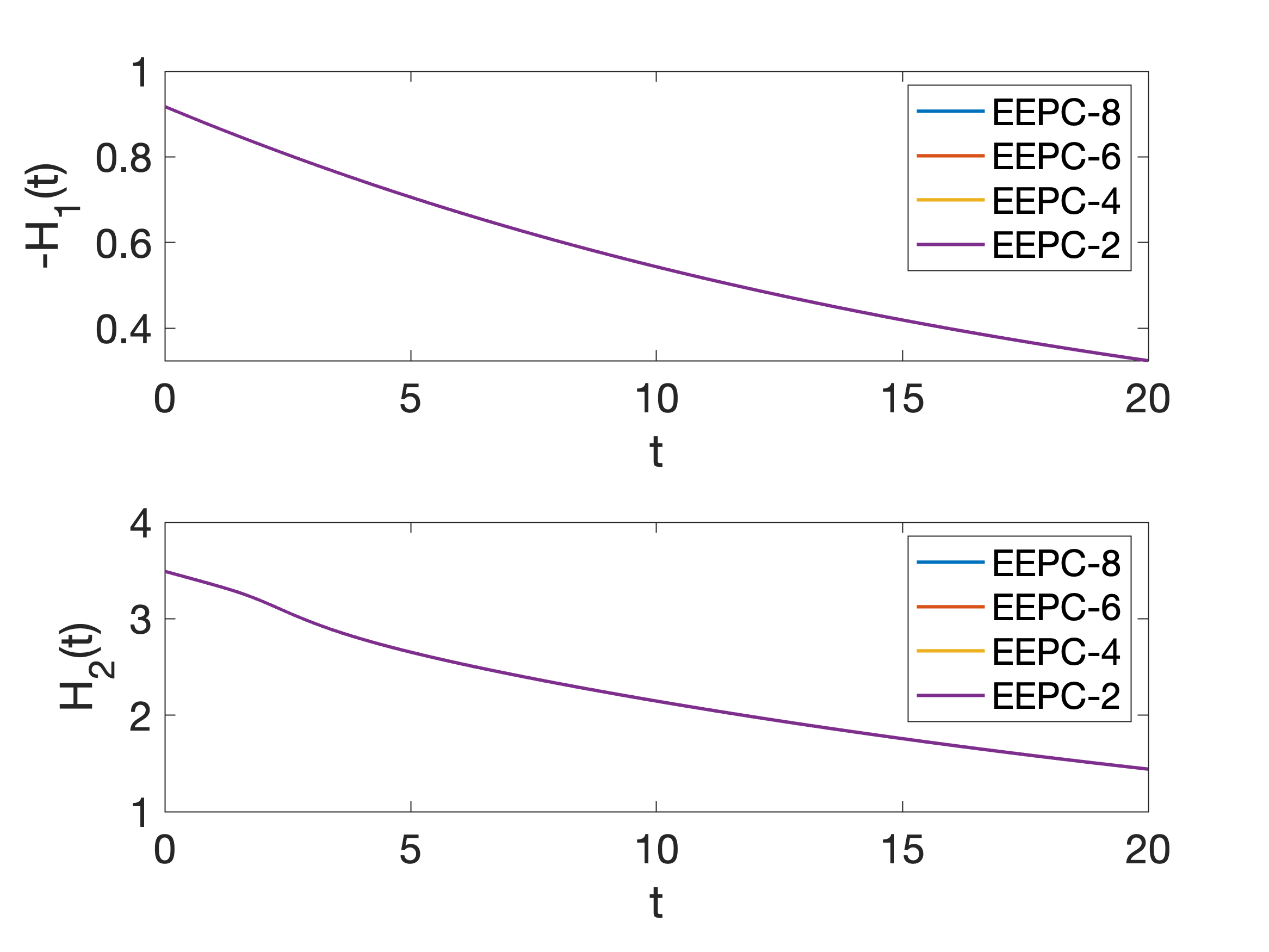}
         \caption{Decay of energy}
         \label{energyDecay-kdv_Hamil1}
     \end{subfigure}
       \hfill
          \begin{subfigure}[b]{0.49\textwidth}
         \centering
\includegraphics[width=\textwidth]{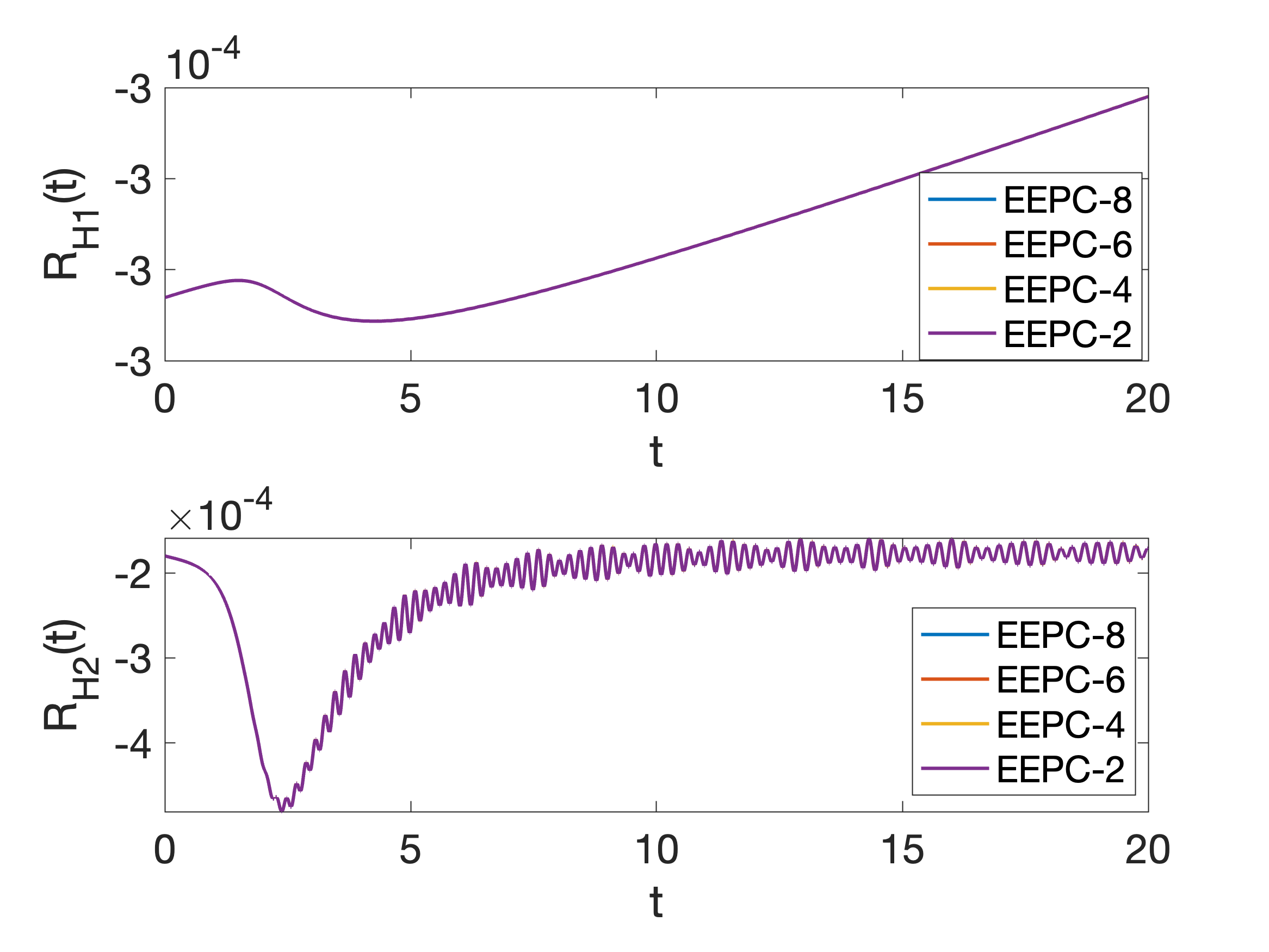}
         \caption{Conservation of energy decay rate}
         \label{E_decay_rate_kdv_Hamil1}
     \end{subfigure}
     \hfill
       \begin{subfigure}[b]{0.48\textwidth}
         \centering
\includegraphics[width=\textwidth]{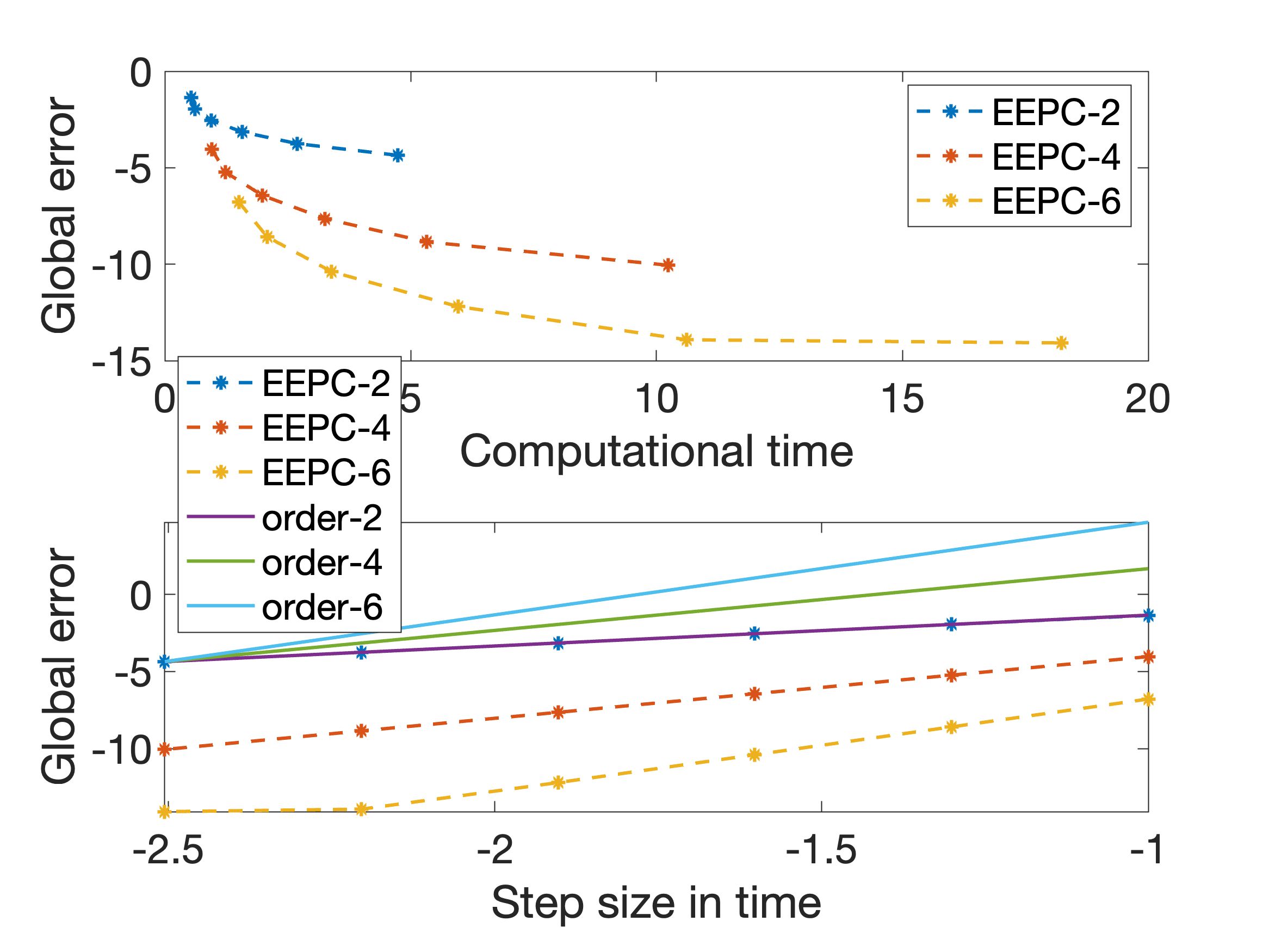}
         \caption{Order in time}
         \label{order_kdv_Hamil1}
     \end{subfigure}
     \hfill
        \caption{Plots of the exponential energy dissipation-preserving collocation methods of various orders  for the KdV equation \eqref{KdV-H1}, where $\gamma=0.01$ and $\Delta x=\pi/40$. In Fig  \ref{solution-kdv_Hamil1}, \ref{energyDecay-kdv_Hamil1} and  \ref{E_decay_rate_kdv_Hamil1}, fixed time step $\Delta t=0.009$ is used.}
        \label{kdv-gamma1_Hamil1}
\end{figure}

\subsubsection{Semi-discrete system based on the second Hamiltonian form}
 We consider the  semi-discrete Hamiltonian system based on the second Hamiltonian form. The term $u \partial_x+\partial_x u$ can be  discretized as
$$A(\mathbf{u})=\frac{1}{2 \Delta x}\left(\begin{array}{cccc}0 & u^1+u^2 & & -\left(u^1+u^{N_1}\right) \\ -\left(u^1+u^2\right) & \ddots & \ddots & \\ & \ddots & \ddots & u^{N_1-1}+u^{N_1} \\ u^1+u^{N_1} & & -\left(u^{N_1-1}+u^{N_1}\right) & 0\end{array}\right),$$ 
and the term $\partial_x^3$ can be discretized as $D_3=D_1*D_2$. Thus, the semi-discrete system is given by 
\begin{equation}\label{Eq-kdv-H2}
\dot{\mathbf{u}}= (\nu D_3+2\alpha/3A(\mathbf{u})+\rho D_1)\mathbf{u}-2 \gamma \mathbf{u},
\end{equation}
which can be written as in the form of \eqref{H-damp-ODE} with the skew-symmetric matrix as $S(\mathbf{u})=(\nu D_3+2\alpha/3A(\mathbf{u})+\rho D_1)$, the energy as $H_2(\mathbf{u})=\mathbf{u}^2/2$ and the coefficient matrix as $D(t)=2\gamma$. 

$H_2$ is a conformal invariant for the  damped Hamiltonian system \eqref{Eq-kdv-H2} 
since $H_2$ satisfies
$$
\frac{d}{d t} H_2(\mathbf{u}(t))=-4 \gamma H_2(\mathbf{u}(t)).
$$
Therefore, the solution evolves on a sphere shrinking at an exponential rate.  By Theorem \ref{Structure-preservation}, the proposed exponential methods can preserve the energy property, i.e., preserving the conformal invariant $H_2$. 

\textbf{Case I: $\textbf{D(t)}$ with constant equal diagonal elements $\gamma=0.01$.} We apply the proposed methods to  \eqref{Eq-kdv-H2} following Remark \ref{remark-S}.
Fig \ref{solution-kdv_Hamil2} illustrates the numerical solution evolving over time. Figure \ref{energyDecay-kdv_Hamil2} shows the dissipation of the energy function $H_2$ and $H_1$ over time. Here, we notice that the discrete energy $H_1$
does not strictly dissipate initially; however, it starts to dissipate stably after a few time steps. Figure \ref{E_decay_rate_kdv_Hamil2} demonstrates that the dissipation rate of energy $H_2$ can be kept to machine accuracy while the error in the dissipation rate of $H_1$ remains bounded and oscillates. Figure \ref{order_kdv_Hamil2} confirms the order of the proposed methods and also demonstrates the efficiency of the high-order method in achieving the same global error in less time.
\begin{figure}[H]
     \centering
     \begin{subfigure}[b]{0.49\textwidth}
         \centering
\includegraphics[width=\textwidth]{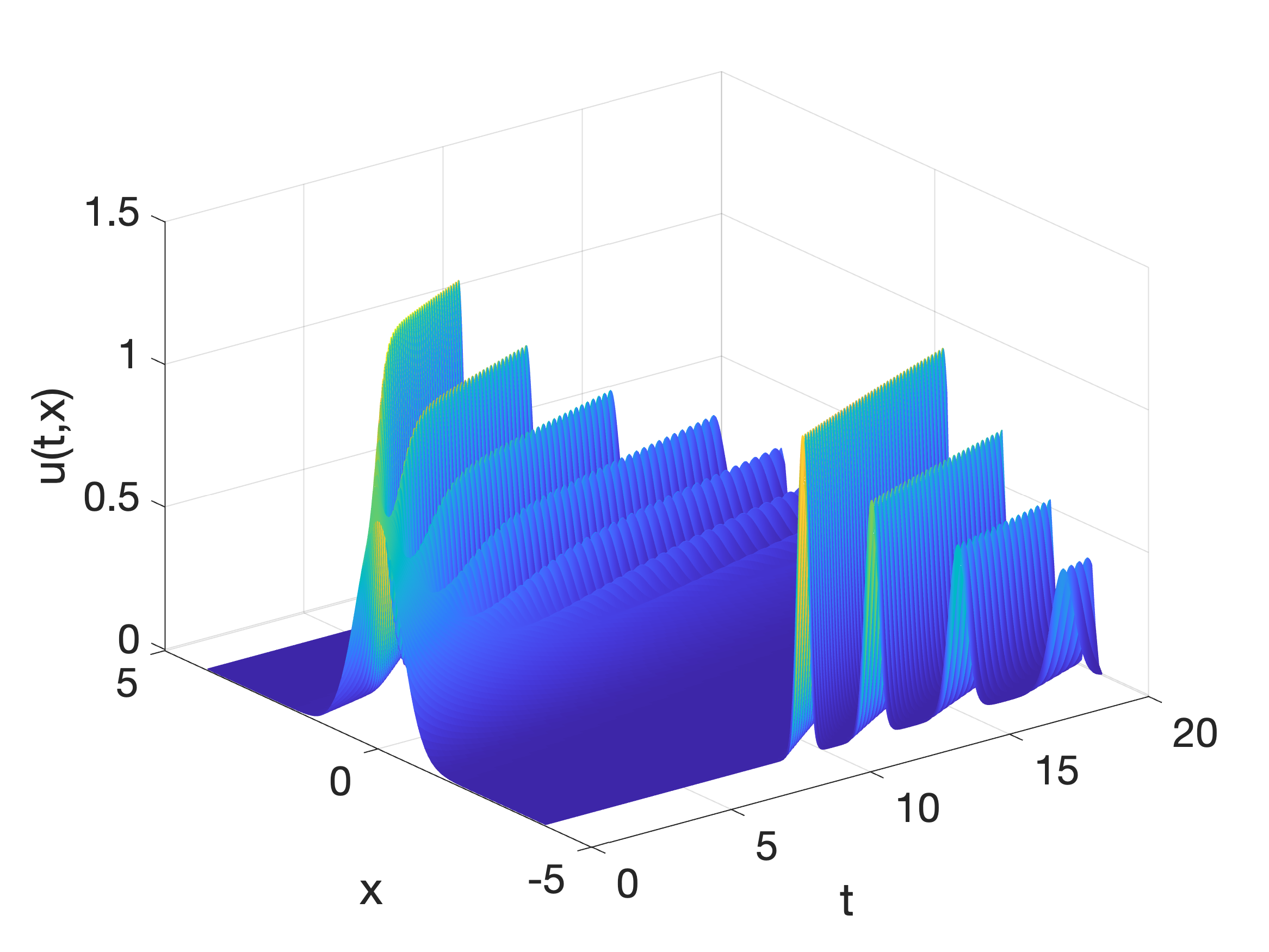}
         \caption{Solution}
         \label{solution-kdv_Hamil2}
     \end{subfigure}
     \hfill
       \begin{subfigure}[b]{0.49\textwidth}
         \centering
\includegraphics[width=\textwidth]{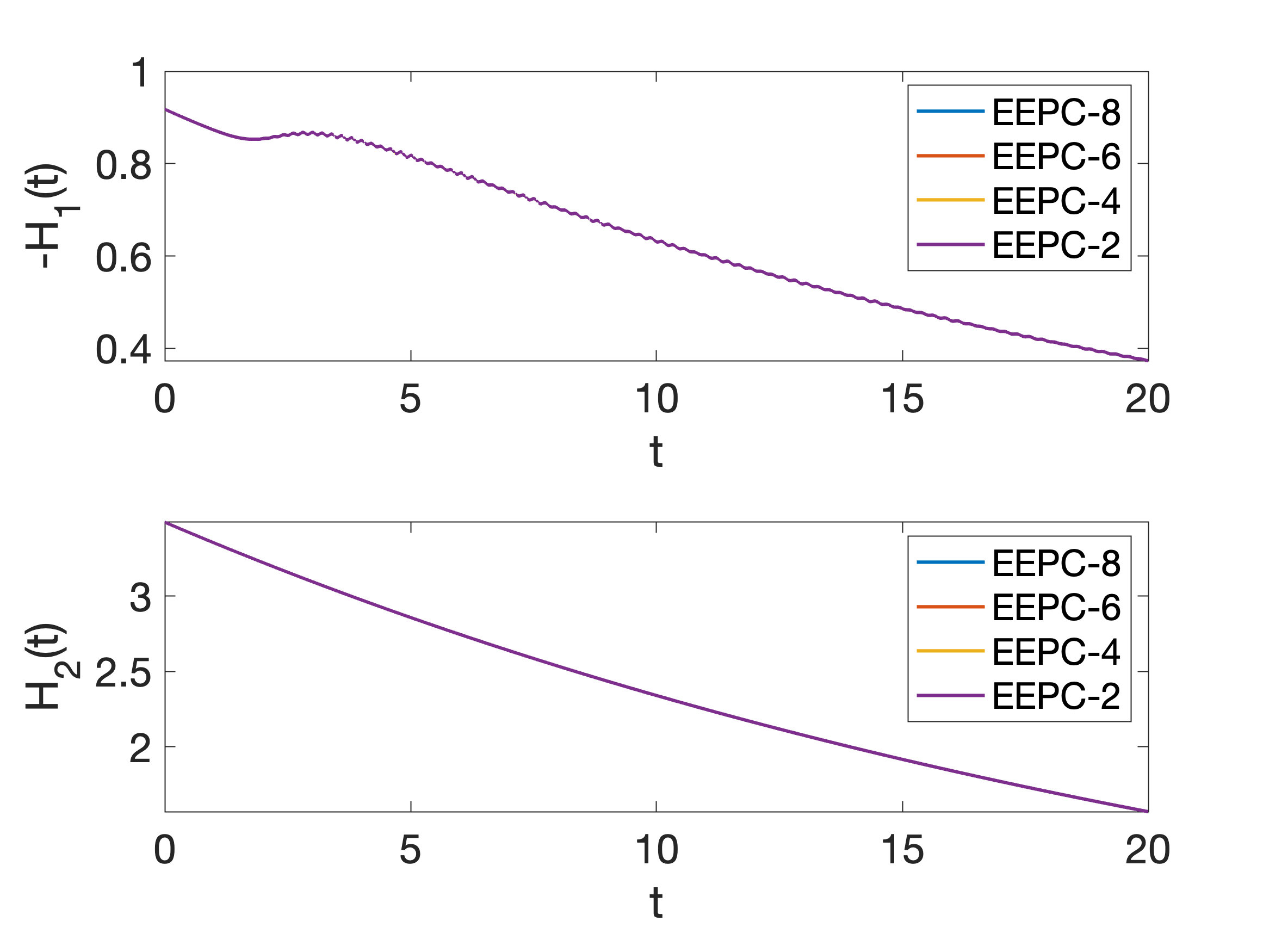}
         \caption{Decay of energy}
         \label{energyDecay-kdv_Hamil2}
     \end{subfigure}
       \hfill
          \begin{subfigure}[b]{0.49\textwidth}
         \centering
\includegraphics[width=\textwidth]{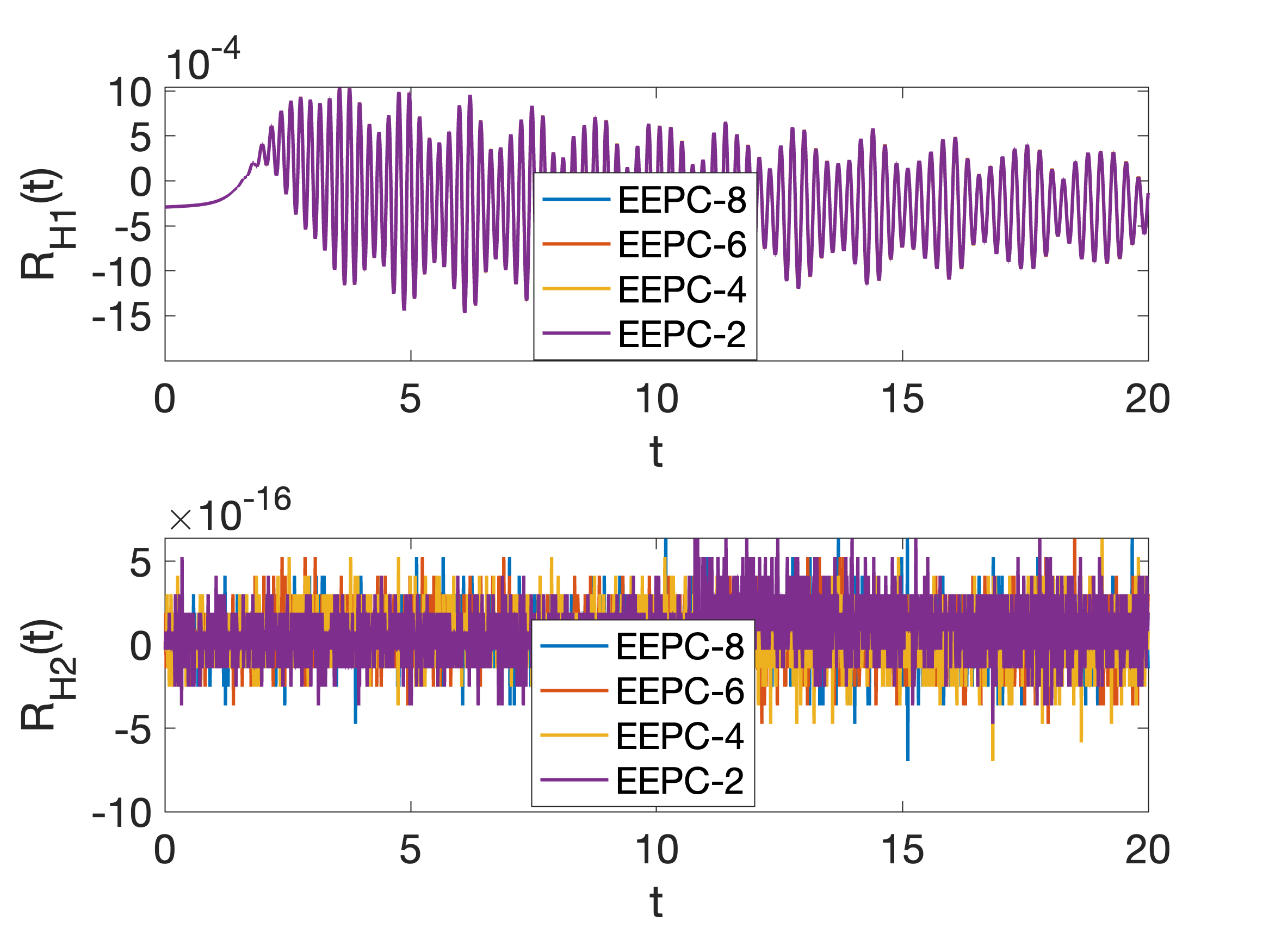}
         \caption{Conservation of energy decay rate}
         \label{E_decay_rate_kdv_Hamil2}
     \end{subfigure}
     \hfill
       \begin{subfigure}[b]{0.48\textwidth}
         \centering
\includegraphics[width=\textwidth]{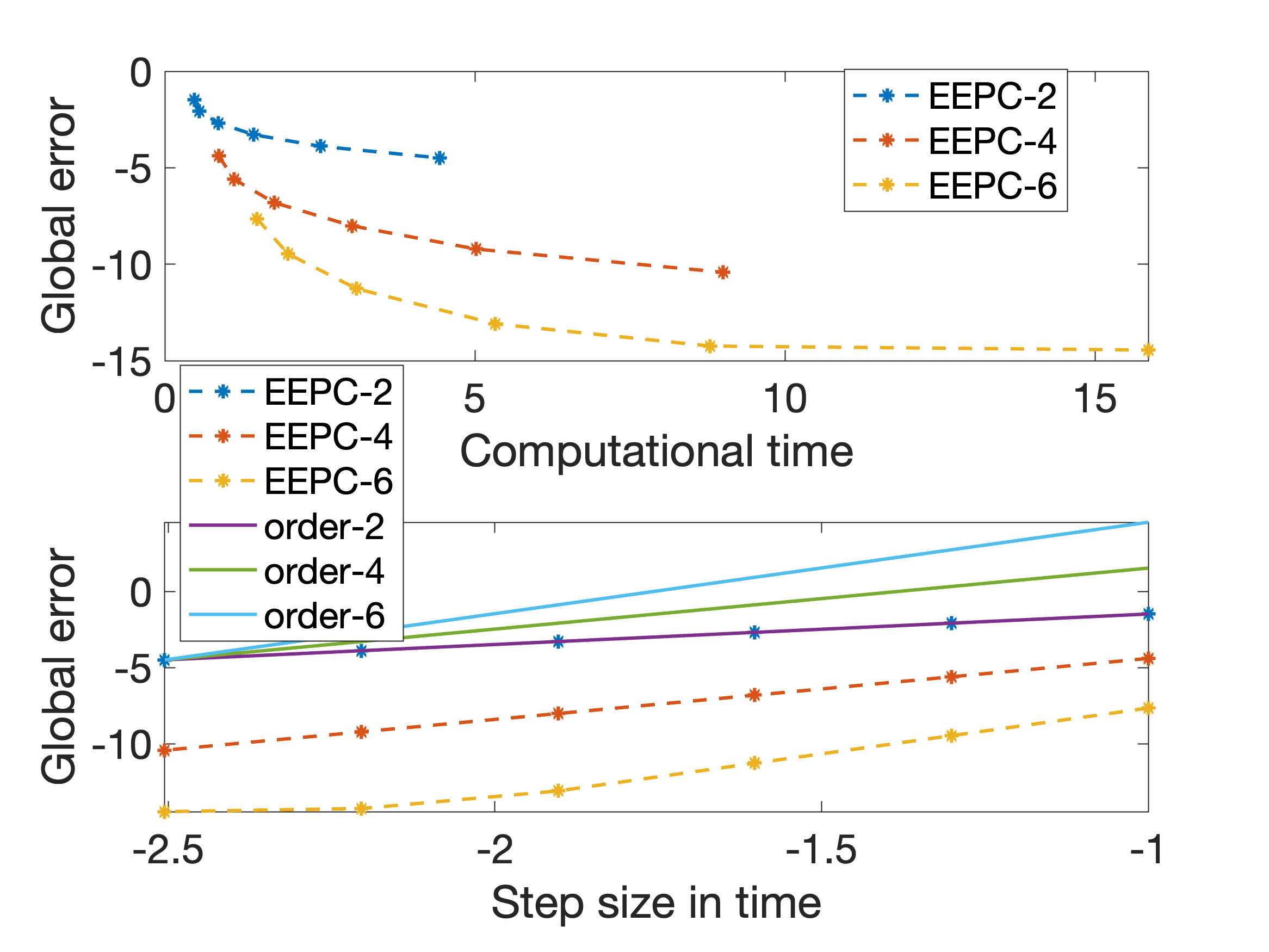}
         \caption{Order in time}
         \label{order_kdv_Hamil2}
     \end{subfigure}
     \hfill
        \caption{Plots of the exponential energy-preserving collocation methods of various orders  for the KdV equation with the second Hamiltonian form \eqref{Eq-kdv-H2}, where $\gamma=0.01$ and $\Delta x=\pi/40$. In Fig  \ref{solution-kdv_Hamil2}, \ref{energyDecay-kdv_Hamil2} and  \ref{E_decay_rate_kdv_Hamil2}, fixed time step $\Delta t=0.009$ is used.}
        \label{kdv-gamma1_Hamil2}
\end{figure}
With the second Hamiltonian form, the energy is quadratic, satisfying the conditions of  Theorem \ref{Structure-preservation} for all diagonal matrix $D(t)$ with equal diagonal elements. However, this condition is not maintained for diagonal matrices $D(t)$ with unequal diagonal elements, even in the simplest cases where $D(t)$ are constant matrices. To numerically  demonstrate these scenarios, we consider the second Hamiltonian form with  two other sets of $D(t)$ as follows.

\textbf{Case II: $\textbf{D(t)}$ with constant unequal diagonal elements $\gamma$.} 
We introduce a random perturbation to each default diagonal element of $D(t)$  in Case I  within $10\%$ of the default value, resulting in unequal diagonal elements. As seen in Fig \ref{solution-kdv_Hamil2_gamma2}, there is a consistent damping of the numerical wave amplitude over time. Figure \ref{energyDecay-kdv_Hamil2_gamma2} depicts the decay of energy, which appears to be exponential. Figure \ref{E_decay_rate_kdv_Hamil2_gamma2} illustrates that the conservation property described in  Theorem \ref{Structure-preservation} is not maintained once the condition of the energy (or invariant in a more general sense) is not satisfied. However, we observe a constant difference between machine accuracy and the error computed from formula \eqref{dissp-rate-average} based on  the numerical dissipation rate of $H_2$.  This observation inspires further exploration of the correct underlying conservation law using more advanced technique such as deep learning \cite{eidnes2023pseudo}. Figure \ref{order_kdv_Hamil2_gamma2} confirms the order of the proposed methods and demonstrates that higher-order methods are more effective.

\begin{figure}[H]
     \centering
     \begin{subfigure}[b]{0.49\textwidth}
         \centering
\includegraphics[width=\textwidth]{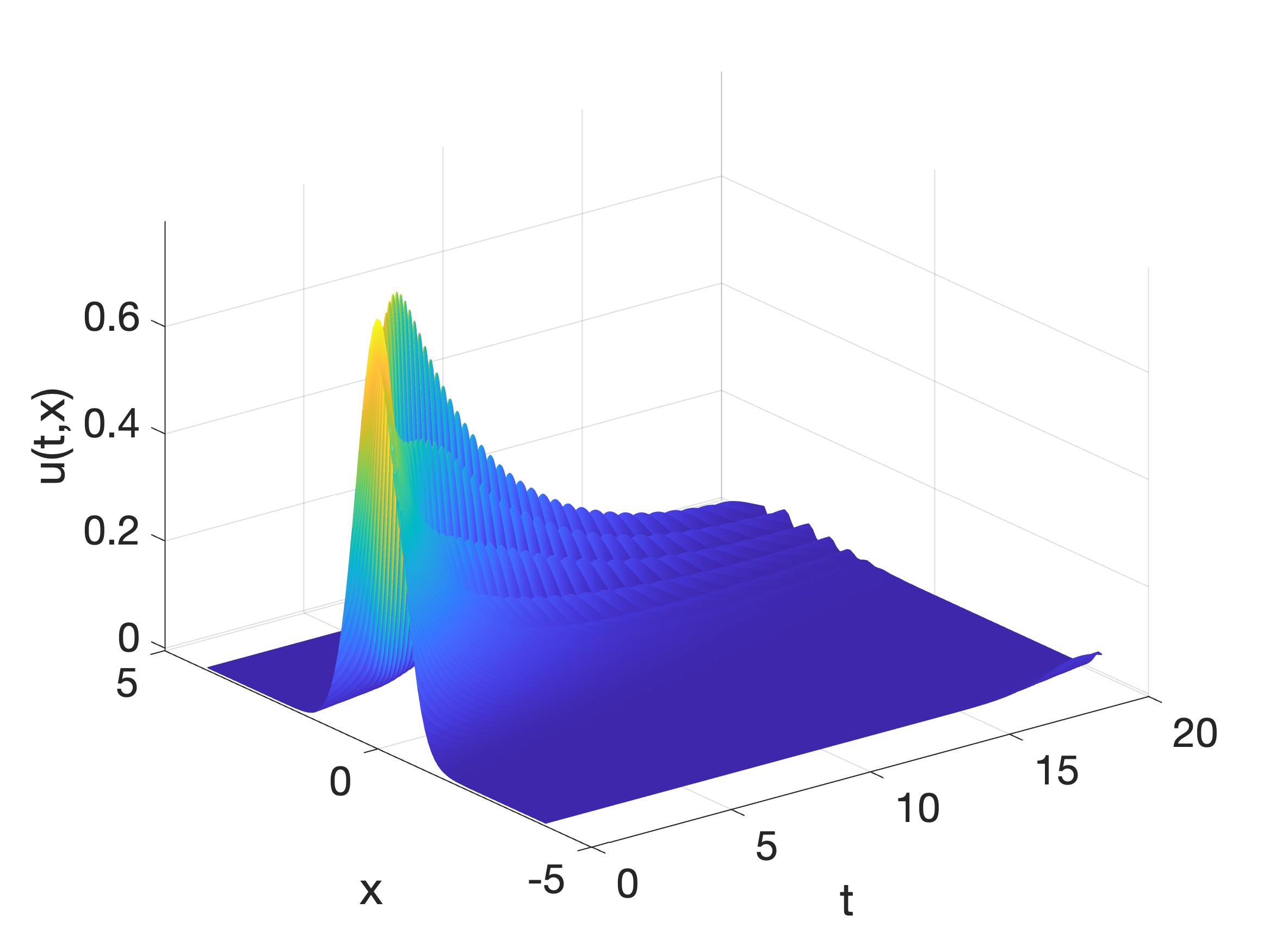}
         \caption{Solution}
         \label{solution-kdv_Hamil2_gamma2}
     \end{subfigure}
     \hfill
       \begin{subfigure}[b]{0.49\textwidth}
         \centering
\includegraphics[width=\textwidth]{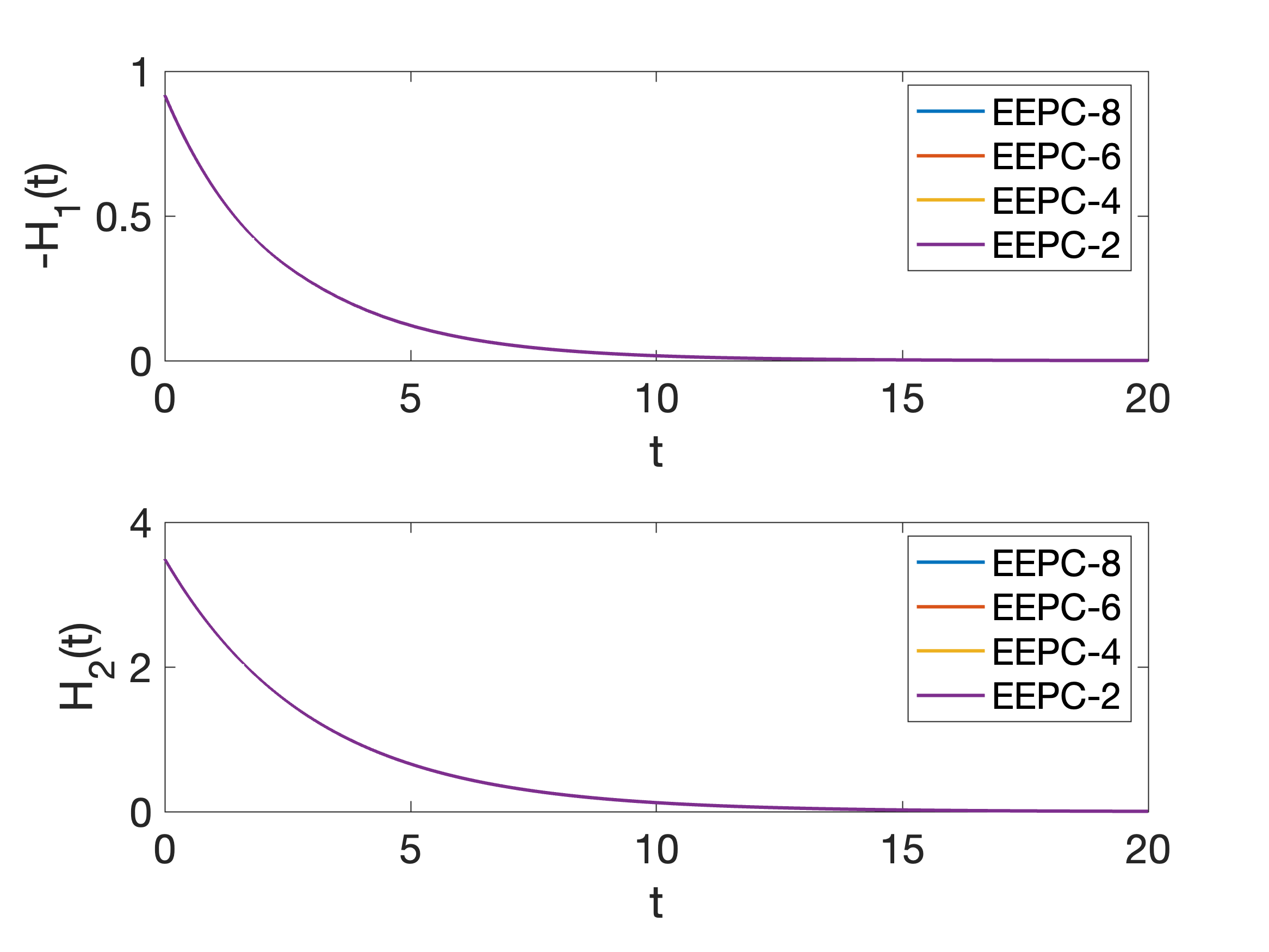}
         \caption{Decay of energy}
         \label{energyDecay-kdv_Hamil2_gamma2}
     \end{subfigure}
       \hfill
          \begin{subfigure}[b]{0.49\textwidth}
         \centering
\includegraphics[width=\textwidth]{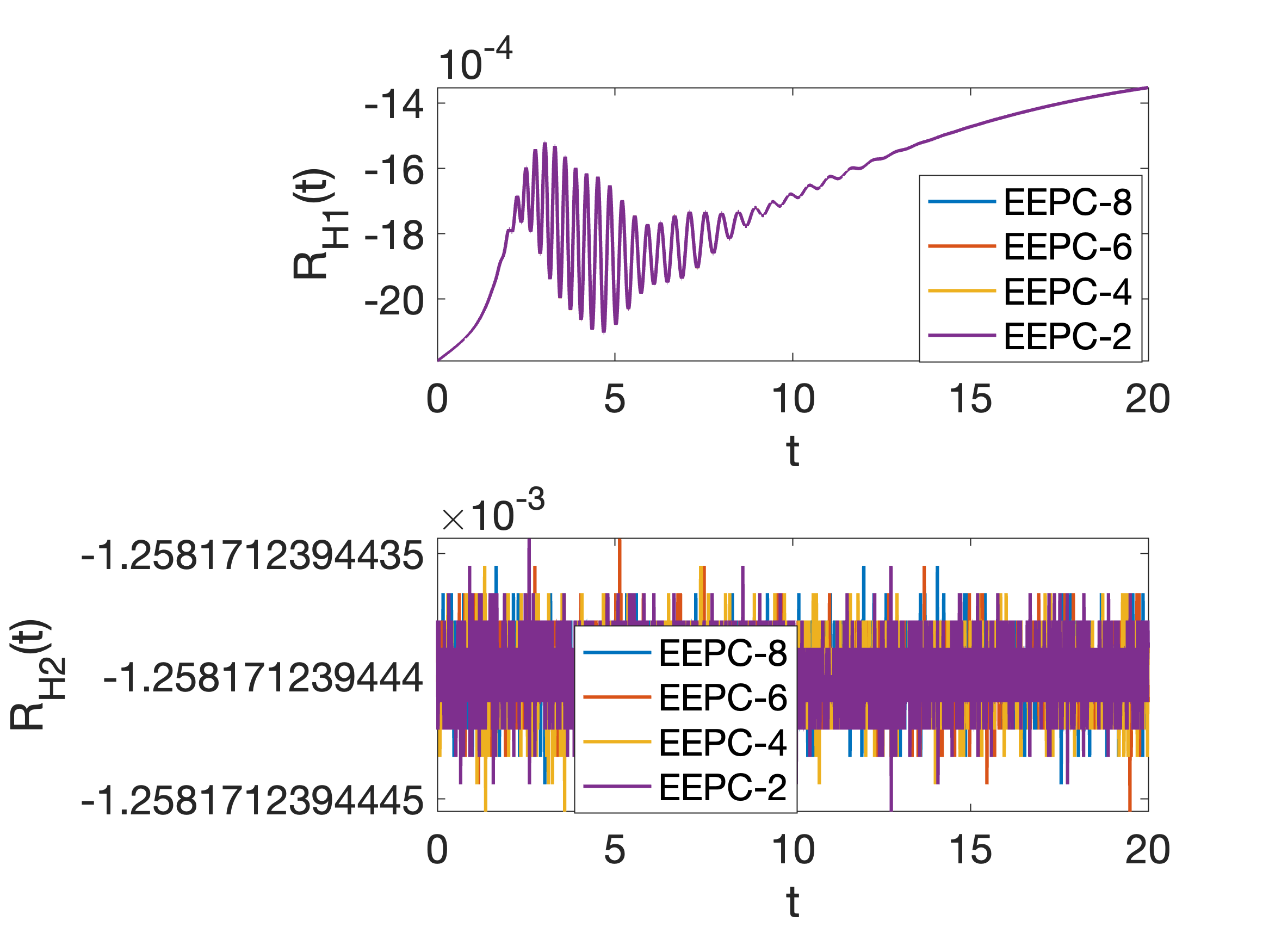}
         \caption{Conservation of energy decay rate}
         \label{E_decay_rate_kdv_Hamil2_gamma2}
     \end{subfigure}
     \hfill
       \begin{subfigure}[b]{0.48\textwidth}
         \centering
\includegraphics[width=\textwidth]{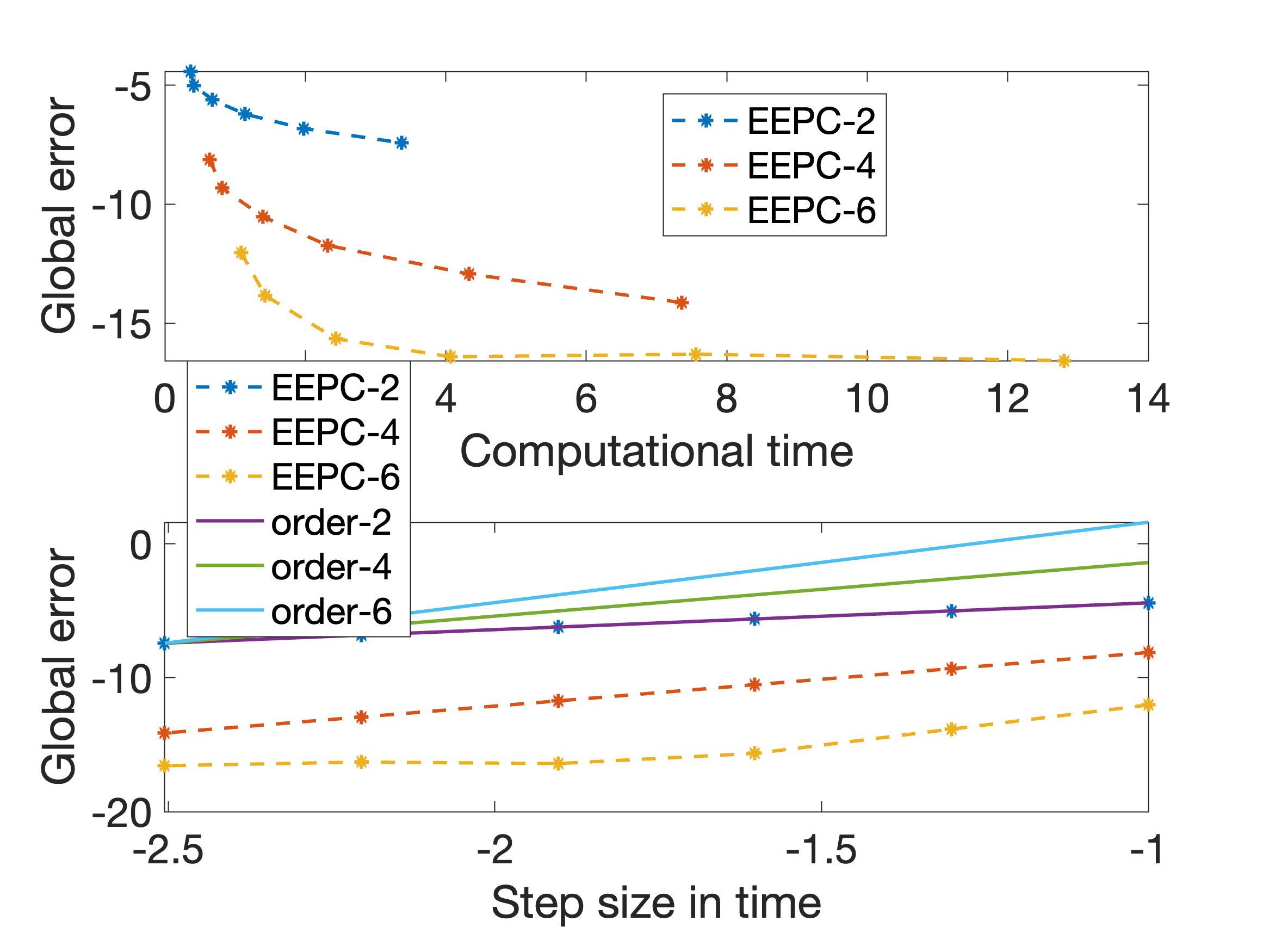}
         \caption{Order in time}
         \label{order_kdv_Hamil2_gamma2}
     \end{subfigure}
     \hfill
        \caption{Plots of the exponential energy dissipation-preserving collocation methods of various orders  for the KdV equation with the second Hamiltonian form, where the diagonal elements of $D(t)$ are random numbers around $0.2$ and $\Delta x=\pi/40$. In Fig  \ref{solution-kdv_Hamil2_gamma2}, \ref{energyDecay-kdv_Hamil2_gamma2} and  \ref{E_decay_rate_kdv_Hamil2_gamma2}, fixed time step $\Delta t=0.009$ is used.}
        \label{kdv-gamma2_Hamil2}
\end{figure}

\textbf{Case III: $\textbf{D(t)}$ with time-dependent equal diagonal elements $\gamma$.} 
We consider equal diagonal elements $\gamma_k(t)=e^{-t}/2$.
Figure \ref{kdv-gamma3_Hamil2} shows the numerical solution, where we observe a decrease in wave amplitude followed by the spreading of the initial peak into smaller secondary peaks over time.   In Figure \ref{energyDecay-kdv_Hamil2_gamma3}, the energy decay appears exponential. Figure \ref{kdv-gamma3_Hamil2} demonstrates that the conservation property in Theorem \ref{Structure-preservation} is maintained for diagonal matrices $D(t)$ as long as their diagonal elements are equal, regardless of whether they are constant. Figure \ref{order_kdv_Hamil2_gamma3} confirms the order of the proposed methods, also showing that higher-order methods are more effective.

\begin{figure}[H]
     \centering
     \begin{subfigure}[b]{0.49\textwidth}
         \centering
\includegraphics[width=\textwidth]{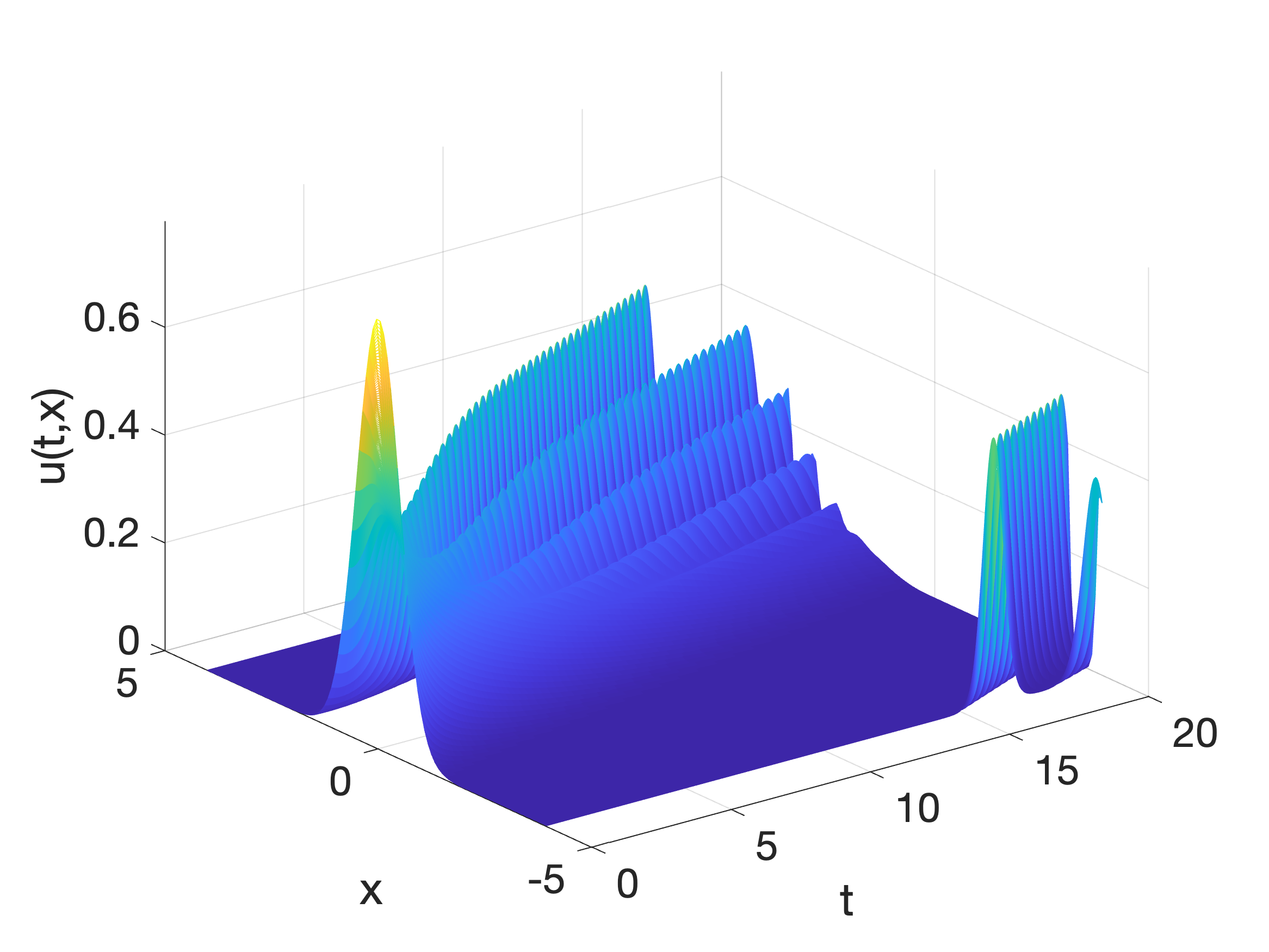}
         \caption{Solution}
         \label{solution-kdv_Hamil2_gamma3}
     \end{subfigure}
     \hfill
       \begin{subfigure}[b]{0.49\textwidth}
         \centering
\includegraphics[width=\textwidth]{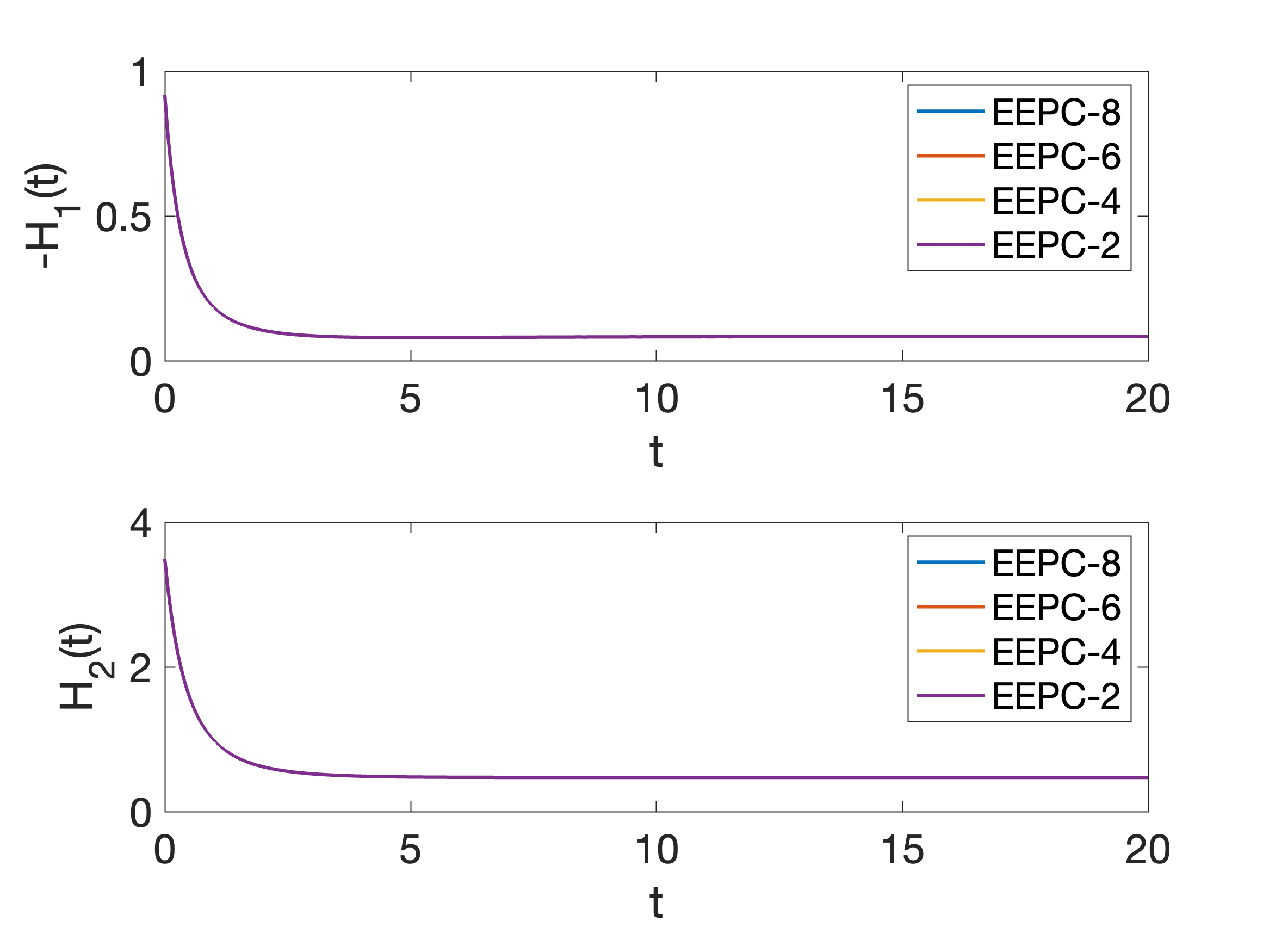}
         \caption{Decay of energy}
         \label{energyDecay-kdv_Hamil2_gamma3}
     \end{subfigure}
       \hfill
          \begin{subfigure}[b]{0.49\textwidth}
         \centering
\includegraphics[width=\textwidth]{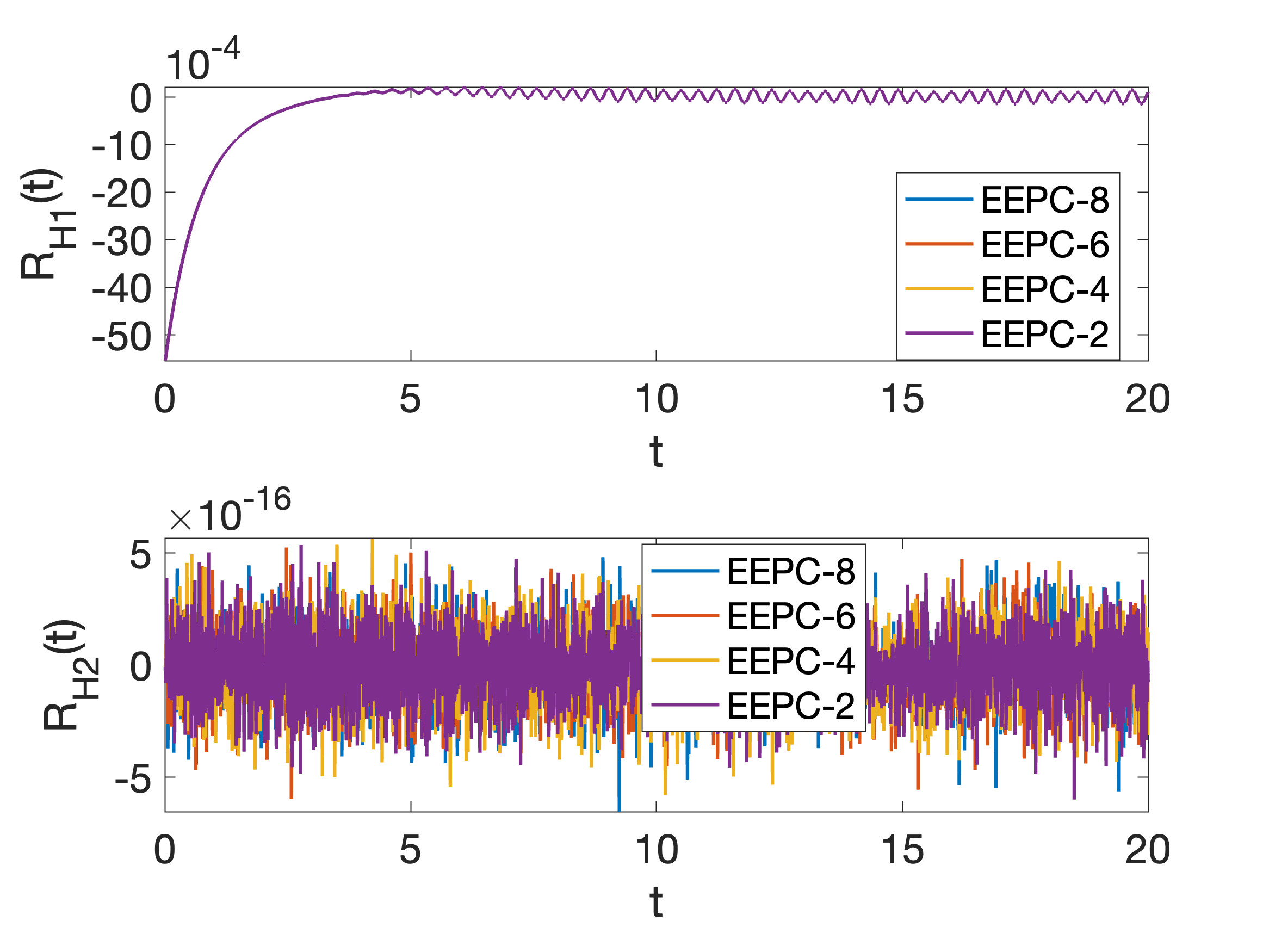}
         \caption{Conservation of energy decay rate}
         \label{E_decay_rate_kdv_Hamil2_gamma3}
     \end{subfigure}
     \hfill
       \begin{subfigure}[b]{0.48\textwidth}
         \centering
\includegraphics[width=\textwidth]{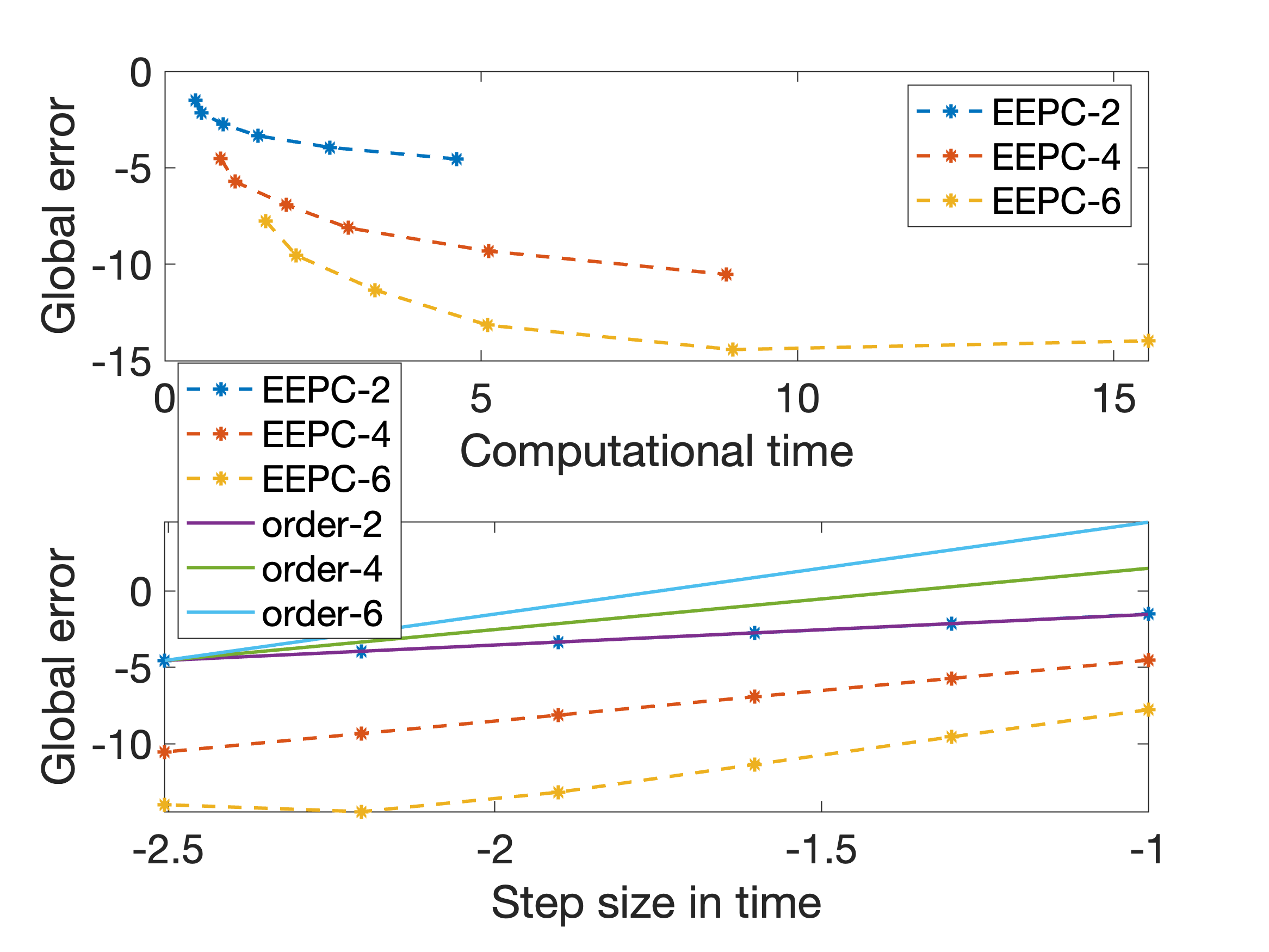}
         \caption{Order in time}
         \label{order_kdv_Hamil2_gamma3}
     \end{subfigure}
     \hfill
        \caption{Plots of the energy dissipation-preserving collocation methods of various orders  for the KdV equation with the second Hamiltonian form, where all diagonal elements of matrix $D(t)$ are equal to $e^{-t}$ and $\Delta x=\pi/40$. In Fig  \ref{solution-kdv_Hamil2_gamma3}, \ref{energyDecay-kdv_Hamil2_gamma3} and  \ref{E_decay_rate_kdv_Hamil2_gamma3}, fixed time step $\Delta t=0.009$ is used.}
        \label{kdv-gamma3_Hamil2}
\end{figure}




\section{Conclusion}\label{conclusion}
In this study, arbitrarily high-order structure-preserving methods have been developed for damped Hamiltonian systems by combining exponential integrators with energy-preserving collocation methods. These integrators maintain the energy dissipation ratio introduced by damping terms, exhibiting notable advantages in terms of efficiency, accuracy, and stability. This makes them well-suited for solving complex and large-scale problems. The presented results are based on damped Hamiltonian systems with a diagonal coefficient matrix $D(t)$ of special forms. For certain cases, such as the KdV equation with 
$D(t)$ having constant unequal diagonal elements, numerical simulations suggest the existence of particular energy dissipation laws, despite the challenge in uncovering such energy conservation laws for the true solution. This observation prompts further investigation into the underlying physical laws using more advanced techniques. Additionally, extending suitable structure-preserving methods for more general Hamiltonian systems with varying damping terms remains an open research question for the future.

\bibliographystyle{elsarticle-num}
\bibliography{main}

\end{document}